\def\x{{\bm   x}}
\def\u{{\bm   u}}
\def\g{{\bm   g}}
\def\v{{\bm   v}}
\def\q{{\bm   q}}
\def\F{{\bm   F}}
\def\I{{\bm   I}}
\def\J{{\bm   J}}
\def\n{{\bm   n}}
\def\({\left(}
\def\[{\left[}
\def\){\right)}
\def\]{\right]}
\def\div{\nabla\cdot }
\def\grad{\nabla }
\newtheorem{lem}{Lemma}
\newtheorem{thm}{Theorem}
\numberwithin{equation}{section}
\numberwithin{thm}{section}
\numberwithin{lem}{section}
\begin{document}
\title{Entropy stable  modeling of  non-isothermal   multi-component diffuse-interface two-phase  flows with realistic equations of state\thanks{This work is  supported by    KAUST research fund to the
Computational Transport Phenomena Laboratory at KAUST.}
}

\author{Jisheng Kou\thanks{School of Mathematics and Statistics, Hubei Engineering  University, Xiaogan 432000, Hubei, China. } \and Shuyu Sun\thanks{Corresponding author. Computational Transport Phenomena Laboratory, Division of Physical Science and Engineering,
King Abdullah University of Science and
Technology, Thuwal 23955-6900, Kingdom of Saudi Arabia.   Email: {\tt shuyu.sun@kaust.edu.sa}.}
}

 \maketitle

\begin{abstract}
  In this paper, we consider mathematical modeling and numerical simulation of non-isothermal  compressible   multi-component diffuse-interface  two-phase  flows with realistic equations of state.  A general  model with general reference velocity  is derived  rigorously  through   thermodynamical laws and Onsager's reciprocal principle, and it is capable of  characterizing compressibility and partial miscibility between multiple  fluids.  We prove a  novel relation among the  pressure, temperature and chemical potentials, which results in  a new formulation of the  momentum conservation equation indicating   that the gradients of  chemical potentials and temperature become the primary driving force of the fluid motion except for the external forces.  
A key challenge in numerical simulation is to develop entropy stable numerical schemes preserving the   laws of thermodynamics. Based on  the convex-concave  splitting of Helmholtz free energy density with respect to molar densities and temperature,  we  propose an entropy stable numerical method, which solves the total energy balance equation directly, and thus, naturally satisfies the first law of thermodynamics.  Unconditional  entropy stability (the second law of thermodynamics) of the proposed method is proved by estimating the variations of Helmholtz free energy and kinetic energy with time steps.  Numerical results validate  the proposed  method.

\end{abstract}
\begin{keywords}
 Multi-component two-phase flow;  Non-isothermal flow; Entropy stability; Convex  splitting.
\end{keywords}
\begin{AMS}
65N12; 76T10; 49S05
 \end{AMS}

\section{Introduction}

Various non-isothermal multi-component  two-phase   flows are ubiquitous  in nature and industry,  and thus their research carries broad and far-reaching significance.    An industrial example is the phase transition of hydrocarbon mixtures   in the reservoir; at specified thermodynamical conditions,  a hydrocarbon mixture  may split into gas and liquid (oil) to stay in an equilibrium state; when  the thermal enhanced oil recovery   is employed, intentionally introduced heat disrupts the equilibrium states and vaporizes part of the oil,  thereby changing the physical properties   such that oil  flows more freely through the reservoir \cite{chen2006multiphase,kou2017nonisothermal}.    Another example is the utilization of supercritical fluids   as solvents in chemical analysis and synthesis \cite{liu2016binaryPRE}. In  the natural world, many common  phenomena, such as boiling, evaporation and  condensation, are also related to physical properties and motion of  non-isothermal   two-phase   flows   \cite{liu2015liquid}.

 For  realistic fluids, the interfaces between multiple fluids always exist and play a very important role in the mass and energy transfer between different phases. 
 Partial miscibility of multiple fluids,  a common phenomenon displayed by realistic fluids in  experiments and practical processes, takes place through the interfaces. 
 Moreover, capillarity effect, a significant  mechanism  of flows in porous media,   is  also caused by the anisotropic attractive force of molecules on the interfaces  \cite{kousun2015CMA}. 
To describe the gas-liquid interfaces,  the diffuse-interface models for  multiphase    flows have been  developed in the literature. The pioneering work is that  the density-gradient contribution on the interfaces is introduced by van der Waals in the  energy density (see \cite{Onuki2007PRE} and the references therein), and on the base of it, the   Korteweg stress formulation is induced by composition gradients (see \cite{liu2015liquid,Onuki2007PRE} and the references therein).    Various phase-field models for immiscible and incompressible two-phase flows have been developed  and simulated  in the literature,  \cite{CahnHilliard1958,Emmerich2011PhaseField,Abels2012TwoPhaseModel,Bao2012FEM,Kim2009PhaseField,Alpak2016PhaseField,Guo2015phasefield,Ding2007PhaseField,Rasheed2013phasefield,Li2016MCH,Dong2017PhaseField,Zhang2016Phasefield,Yang2017MCH,Kim2012PhaseField,Boyer2006MCH,shen2016JCP} for instance. 

 Modeling and simulation of compressible multi-component  two-phase flows with partial miscibility and realistic equations of state (e.g. Peng-Robinson equation of state \cite{Peng1976EOS})  are intensively  studied  in recent years  \cite{qiaosun2014,kousun2015SISC,kousun2015CHE,fan2017componentwise,kouandsun2017modeling,Li2017IEQPREOS,Peng2017convexsplitting}.  The multi-component    models  with realistic equations of state are traditionally  applied   for simulation of many  problems in chemical and petroleum engineering, for example,  the phase equilibria calculations \cite{jindrova2013fast,mikyvska2015General,mikyvska2011new,mikyvska2012investigation,kousun2016Flash,kou2018nvtpc,Nagarajan1991,firoozabadi1999thermodynamics} and prediction of surface tension  \cite{miqueu2004modelling,kousun2015CMA,li2001prediction,firoozabadi1999thermodynamics}, but the fluid motion is never considered in these applications.  The models of compositional fluid flows in porous media, for example, \cite{polivka2014compositional,Moortgat2013compositional,Hoteit2013diffusion,Firoozabadi2007Diffusion}, describe the  fluid motion through Darcy's law, but  using the sharp interface.   A general diffuse interface model for compressible multi-component  two-phase flows with partial miscibility are developed in \cite{kouandsun2016multiscale,kouandsun2017modeling} based on   the thermodynamic laws and  realistic equations of state.  
 It uses molar densities as the primal state variables, and takes a general  thermodynamic pressure as  a function of the molar density and temperature, thereby  never suffering  from the difficulty of constructing the pressure  equation. However,  the temperature field  in this model is assumed to be homogeneous and constant.

There exist many situations, such as boiling, evaporation,   condensation and thermal enhanced oil recovery,  in which phase transitions and fluid motions are highly influenced by an inhomogeneous and variable  temperature field.  In \cite{Anderson1998DiffuseInterfaceMethods}, the non-isothermal diffuse-interface models for the single-component and binary fluids are developed   by including gradient contributions in the internal energy. In \cite{Onuki2005PRL,Onuki2007PRE}, Onuki  generalizes the van der Waals theory for the single-component fluids by including gradient contributions in both the internal energy and the entropy.
 Subsequently, improvements and applications of the model in \cite{Onuki2005PRL,Onuki2007PRE} are investigated in  \cite{liu2015liquid,Pecenko2011VanderWaalsModel,Bueno2016liquid,Qian2016heatflow,Chaudhri2014VDWApp,liu2016binaryPRE,Pecenko2010approachVanderWaals} and the other literature; especially, in  \cite{liu2015liquid}  a  continuum mechanics modeling framework for liquid-vapor  flows is rigorously derived using the thermodynamical laws. The non-isothermal diffuse-interface models are extended to  the compressible binary fluids  in \cite{liu2016binaryPRE,Gonnella2008binaryVDW}.   The aforementioned  research works are done    on the basis of the van der Waals equation of state, but rarely concerning the other equations of state, for example, the Peng-Robinson equation of state \cite{Peng1976EOS} extensively employed in petroleum and chemical industries due to its accuracy and consistency   for numerous realistic gas-liquid fluids including N$_2$, CO$_2$,  and hydrocarbons.  It is noted that different from the single-component fluids, a reference velocity,  such as mass-average velocity, molar-average velocity and so on,  usually needs to be selected for multi-component fluids \cite{Groot2015NET}, so the models  are expected to be compatible with general reference velocity.  However, up to now,   the rigorous  generalization of   the aforementioned models to  multi-component flows with general reference velocity is still an open problem.  
 
 In this paper, we will generalize the aforementioned model; more precisely,  we will derive a general non-isothermal multi-component diffuse-interface two-phase model based on  the thermodynamical laws and realistic equations of state (e.g. Peng-Robinson equation of state) with mathematical rigors.  A significant  feature of the general model is that it has a set of  unified formulations for general reference velocities and related mass diffusion fluxes. Moreover, a general  thermodynamic pressure, which is a function of the molar density and temperature, is used and consequently, it is free of constructing the pressure  equation.
 
 The entropy balance equation plays a fundamental role in the derivations of diffuse interface two-phase flow models \cite{Anderson1998DiffuseInterfaceMethods,Onuki2005PRL,Onuki2007PRE,liu2016binaryPRE,liu2015liquid}, by which we can apply the entropy production principle  (the second law of thermodynamics) to derive the forms of thermodynamical fluxes, including the stress tensor,  the mass diffusion and heat transfer fluxes. Different from the existing  derivation approach  using the Gibbs relation and other thermodynamical relations, we  derive the entropy balance equation directly from the total energy balance equation  (the first law of thermodynamics) based on general mass balance equations involving a general reference velocity.  The transport equation of Helmholtz free energy density is derived to further reduce the entropy balance equation into a form composed of conservative terms and entropy production terms, from which we derive the forms of thermodynamical fluxes using the  non-negativity principle of entropy production  and Onsager's reciprocal principle.
 Consequently, the derived general model  satisfies the first and second laws of thermodynamics and Onsager's reciprocal principle, thereby ensuring the thermodynamical  consistency.

In the  momentum conservation equations of the existing models, the pressure and surface tension are formulated as  the primary driving force of the fluid motion except for the external forces.  In this paper, we prove a  novel relation among the  pressure, temperature and chemical potentials, which leads to a new formulation of the  momentum conservation equation indicating   that the gradients of  chemical potentials and temperature become the primary driving force  except for the external forces.  It will be shown  that on the basis of the new formulation of momentum conservation equation, we can conveniently design   efficient, easy-to-implement and entropy stable numerical algorithms.  

A key and challenging   issue in numerical simulations is how to design an algorithm preserving  the  first and  second laws of thermodynamics obeyed by the model. 
To our best knowledge, there are only a few works regarding such algorithms in the literature. In  \cite{liu2015liquid},  a provably entropy-stable  numerical scheme has been developed   fundamentally on the concept of functional entropy variables. 
Recently, in \cite{kou2017nonisothermal}, using   the convex-concave splitting of  Helmholtz free energy density,  the authors propose an entropy stable numerical method for the single-component  fluids with the Peng-Robinson equation of state, and rigorously prove that the  first and  second laws of thermodynamics are preserved by this method.  But only single-component flows are considered in these existing methods.  In this paper, we focus on    the numerical  schemes preserving  the  laws of thermodynamics for the general multi-component flow model. 

The proposed numerical scheme will be based on the convex splitting approach, which is first proposed in \cite{Elliott1993ConvexSpliting,Eyre1998ConvexSplitting} and  has been  popularly employed in various phase-field models \cite{shen2015SIAM,Wise2009Convex,Eyre1998ConvexSplitting,Baskaran2013convexsplitting,Guo2016convexsplitting}. For isothermal single-component and multi-component    diffuse-interface models with Peng-Robinson equation of state,  the convex splitting schemes have been intensively studied recently \cite{qiaosun2014,fan2017componentwise,kouandsun2017modeling,kou2018nvtpc,Peng2017convexsplitting,Li2017convex}, and very recently, a convex splitting scheme for the non-isothermal single-component fluids  is also developed in \cite{kou2017nonisothermal}. But   the convex splitting approach is never studied yet for non-isothermal multi-component fluids with Peng-Robinson equation of state. In this paper, we will develop  the  convex-concave splitting of  Helmholtz free energy density based on Peng-Robinson equation of state; in particular,  we will prove that this  Helmholtz free energy density is always concave with respect to temperature. 

In the  algorithm proposed in \cite{kou2017nonisothermal},   the internal energy equation is solved instead of  the total energy balance equation. The proposed numerical algorithm in this paper will solve the total energy balance equation directly for ease of  preserving  the first law of thermodynamics.   
The key issue becomes  how to gain  entropy stability; in other words,     the method shall be designed to satisfy   the second law of thermodynamics.
It is well known that the convex-concave splitting schemes  usually result in the  energy-dissipation feature for   isothermal systems, whereas in this work we will  prove that the convex-concave splitting of Helmholtz free energy density with respect to molar densities  and temperature leads to the entropy stability of a numerical scheme for the  non-isothermal systems.  Another great challenge in designing the entropy-stable scheme is the very tightly  coupling relationship among  molar density, energy (temperature)  and velocity. Such relationship will be well treated through very careful  mathematical and physical observations.

The rest of this paper is organized as   follows. In Section 2, we will  introduce the mathematical  model for non-isothermal multi-component diffuse-interface two-phase flows, and rigorous  derivations  for this model will be  provided   in Section 3.  In Section 4, we will propose  an entropy stable numerical scheme based on the convex-concave splitting of Helmholtz free energy density and prove its entropy stability.  Numerical results will be provided  in Section 5 to validate the proposed method.   Finally,   some concluding remarks are given in Section 6.

\section{Non-isothermal multi-component two-phase flow model}\label{secModel}
In this section, we present the system of equations modeling  the non-isothermal multi-component two-phase flows with general reference velocity, which is composed of the mass balance equations, the momentum balance equation and total energy balance equation.    Moreover, a new  formulation of the momentum balance equation is derived  through  the relationship between the pressure, chemical potentials and temperature.

\subsection{Notations and thermodynamical relations}
We consider a fluid mixture  composed of $M$ components in a variable temperature field. The temperature is denoted by $T$. Let $n_i$ denote  the molar density of the $i$th component, and then we denote the molar density vector by $\n = [n_1,n_2,\cdots,n_M]^T$.  

The diffuse  interfaces, which always exist between two phases in a realistic fluid, play an extremely important  role in the heat and mass transfer between multiple phases. The key feature of diffuse interface models is to introduce the local density gradient  contribution in the energy density of inhomogeneous fluids. The  general  Helmholtz free energy density, denoted by $f$, is expressed as
\begin{subequations}\label{eqHelmholtzfreeenergydensity}
\begin{equation}\label{eqHelmholtzfreeenergydensity01}
 f(\n,T)=f_b(\n,T)+f_\grad(\n,T), 
\end{equation}
\begin{equation}\label{eqHelmholtzfreeenergydensity02}
 f_\grad(\n,T)=\frac{1}{2}\sum_{i,j=1}^Mc_{ij}(\n,T)\grad n_i\cdot\grad n_j, 
\end{equation}
\end{subequations}
where  $f_b$ stands for the Helmholtz free energy density of a bulk fluid  and $c_{ij}$ is the cross influence parameter generally relying on molar densities and temperature.   
 
The chemical potential of component $i$ is defined  as
\begin{eqnarray}\label{eqgeneralchemicalpotential}
    \mu_i=\(\frac{\delta f(\n,T)}{\delta n_i}\)_{T,n_1,\cdots,n_{i-1},n_{i+1},\cdots,n_M},~~i=1,\cdots, M,
\end{eqnarray}
where  $\frac{\delta}{\delta n_i}$ represents the variational derivative.
The  entropy density, denoted by  $s$, can be defined as 
\begin{eqnarray}\label{eqEntropyHelmholtz02}
    s = -\(\frac{\delta f}{\delta T}\)_\n.
\end{eqnarray}
We denote    the internal energy density by $\vartheta$.   The internal energy, entropy, and temperature have the following relation
\begin{eqnarray}\label{eqinternalenergydensity}
    \vartheta=f+sT.
\end{eqnarray}
The following relation between the pressure, Helmholtz free energy and chemical potential  \cite{kouandsun2017modeling} holds for the bulk and inhomogeneous fluids
\begin{eqnarray}\label{eqgeneralpressure}
    p=\sum_{i=1}^M\mu_in_i-f,
\end{eqnarray}
which allows us to define the general thermodynamical pressure. 

 According to \eqref{eqgeneralchemicalpotential}, the chemical potential of component $i$ can be deduced  from \eqref{eqHelmholtzfreeenergydensity} as
  \begin{equation}\label{eqgeneralchemicalpotentialForm}
  \mu_i=\mu_i^b-\sum_{j=1}^M\div\(c_{ij}\grad{n_j}\)+ \frac{1}{2}\sum_{j,k=1}^N\frac{\partial c_{jk}}{\partial n_i} \nabla n_j\cdot\nabla n_k,
  \end{equation}
  where $\mu_i^b=\(\frac{\partial  f_b(\n,T)}{\partial n_i}\)_{T,n_1,\cdots,n_{i-1},n_{i+1},\cdots,n_M}$. We get the formulation of the general pressure from \eqref{eqHelmholtzfreeenergydensity}, \eqref{eqgeneralpressure}  and \eqref{eqgeneralchemicalpotentialForm} as
\begin{eqnarray}\label{eqMultiComponentDefPresGeneralB}
    p 
&=&p_b-  \sum_{i,j=1}^Mn_i\div\(c_{ij}\grad{n_j}\)+ \frac{1}{2}\sum_{i,j,k=1}^Nn_i\frac{\partial c_{jk}}{\partial n_i} \nabla n_j\cdot\nabla n_k\nonumber\\
   &&-\frac{1}{2} \sum_{i,j=1}^Mc_{ij}\nabla n_i\cdot\nabla n_j,
\end{eqnarray}
where $p_b=\sum_{i=1}^Mn_i\mu_i^b-f_b.$

We denote by  $M_{w,i}$ the molar weight of component $i$, and  we further define the   mass density 
 of the mixture  as
\begin{eqnarray}\label{eqMultiCompononentTotalDensity}
  \rho=\sum_{i=1}^Mn_iM_{w,i}.
\end{eqnarray}
Let $g$ be the  absolute value of the gravity acceleration   and $h$ is  the height referred to a given reference platform. 

\subsection{Model equations}

 We now state the system of model equations,  which basically consists of  the mass balance equations, the momentum balance equation and total energy conservation  equation. First,  the mass balance equation   for component  $i$   is written as
\begin{equation}\label{eqGeneralNSEQMass}
\frac{\partial n_i}{\partial t}+\div\(\u n_i\)+\div \J_i=0,
\end{equation}
where $\u$ is a reference velocity and $\J_i$ is the diffusion flux of component $i$. 

Let us define the stress tensor 
\begin{equation}\label{eqTotalStressB}
 \bm\sigma=p\I-\bm\tau,~~ \bm\tau\(\u\)= \(\lambda\div\u\)\I+\eta\bm\varepsilon(\u),~~~\bm\varepsilon(\u)=\nabla\u+\nabla\u^T,
\end{equation} 
where $\I$ is the identity tensor,  $\xi$ is the volumetric  viscosity,   $\eta$ is the shear viscosity and $\lambda=\xi-\frac{2}{3}\eta$. We assume that $\eta>0$ and $\lambda>0$. 
The  momentum balance equation is
\begin{equation}\label{eqGeneralNSEQ}
\rho\(\frac{\partial  \u}{\partial t}+ \u\cdot\grad{ \u}\)+\sum_{i=1}^MM_{w,i}\J_i\cdot\grad\u=-\div\bm\sigma
-\sum_{i,j=1}^M\div\(c_{ij}\nabla n_i\otimes\nabla n_j\)+ \rho\g,
\end{equation}
where $\g=-g\grad h$. Thanks to the following relation (which will be proved in Sub-section \ref{secRelationBetweenChPtlTempPres})
\begin{eqnarray}\label{eqRelationBetweenChPtlTempPres}
\grad p+\sum_{i,j=1}^M\div\(c_{ij}\grad n_i\otimes\grad n_j\)= \sum_{i=1}^Mn_i\grad \mu_i+s\grad T 
 ,
\end{eqnarray} 
 we obtain a new formulation of  the momentum balance equation       as
\begin{equation}\label{eqMultiComponentMomentumConserve1C07}
 \rho\(\frac{\partial  \u}{\partial t}+ \u\cdot\grad{ \u}\)+\sum_{i=1}^MM_{w,i}\J_i\cdot\grad\u=-\sum_{i=1}^Mn_i\grad \mu_i-s\grad T+\div\bm{\tau}(\u)+\rho\g ,
\end{equation}
which indicates that the  fluid motion is driven by the gradients of chemical potentials and temperature.

We denote the total energy density $e_t=\vartheta+\frac{1}{2}\rho|\u|^2+\rho gh$.   The total energy conservation  equation is expressed as
 \begin{equation}\label{eqTotalEnergyConservation}
 \frac{\partial e_t}{\partial t} + \div\(\u e_t+\bm\sigma\cdot\u\)=-\div\(\q-\bm\pi\) ,
\end{equation}
where $\q$ is the  heat flux and
\begin{eqnarray}\label{eqGenHeatFlux}
\bm\pi =\sum_{i,j=1}^Mc_{ij}\frac{\partial n_i}{\partial t}\grad{n_j} -\frac{1}{2}\sum_{i=1}^MM_{w,i}|\u|^2\J_i .
\end{eqnarray}

The mass diffusion fluxes in \eqref{eqGeneralNSEQMass} and the  heat flux in \eqref{eqTotalEnergyConservation} are assumed to be linearly related to $\grad \frac{\mu_i}{T}$ and $\grad \frac{1}{T}$ as
\begin{subequations}\label{eqDiffusionHeatFluxes}
\begin{equation}\label{eqDiffusionHeatFluxes01}
\J_i=-\sum_{j=1}^M\mathcal{L}_{i,j}\grad \frac{\mu_j+M_{w,j}gh}{T} + \mathcal{L}_{i,M+1}\grad \frac{1}{T},
\end{equation}
\begin{equation}\label{eqDiffusionHeatFluxes02}
\q=-\sum_{j=1}^M\mathcal{L}_{M+1,j}\grad \frac{\mu_j+M_{w,j}gh}{T} + \mathcal{L}_{M+1,M+1}\grad \frac{1}{T}.
\end{equation}
\end{subequations}
The mobility matrix $\bm{\mathcal{L}}=\(\mathcal{L}_{i,j}\)_{i,j=1}^{M+1}$ shall be symmetric in terms of Onsager's reciprocal principle. Moreover, the second law of thermodynamics requires that it shall be   positive definite or  positive semi-definite. 

For the boundary conditions, we assume that   all boundary terms will vanish when integrating by parts is performed; for example, we can use homogeneous Neumann boundary conditions or periodic boundary conditions.

We have the following comments on  the above model.
\begin{enumerate}
\item The model has thermodynamically-consistent unified formulations for the general reference velocity and the mass diffusion and heat fluxes.
\item The model can characterize the compressibility,  partial miscibility and heat transfer between different phases through the diffuse interfaces.
\item It is different from the case of the pure substance that there exist many choices of  the reference velocity $\u$ and the diffusion flux $\J_i$  for a multi-component mixture. Moreover, $\u$ and $\J_i$ have the tightly dependent  relations as shown below.
\item For the special case  of $\J_i$  taken such that $\sum_{i=1}^MM_{w,i}\J_i=0$,  the terms $\sum_{i=1}^MM_{w,i}\J_i\cdot\grad\u$ in \eqref{eqGeneralNSEQ} and $\sum_{i=1}^MM_{w,i}|\u|^2\J_i$  in \eqref{eqGenHeatFlux} will vanish.  However, in general cases, these terms are essential to ensure the thermodynamical consistency. 
\item Due to the use of  the general  thermodynamic pressure, it is not necessary  to construct the pressure  equation, indeed,  the pressure  can be explicitly   calculated from \eqref{eqMultiComponentDefPresGeneralB} if the molar density and temperature are available.  
\end{enumerate}

We now present  a few choices of the mass diffusion fluxes and heat flux. In general, these flux formulations  shall be chosen such that  the mobility matrix $\bm{\mathcal{L}}$ is symmetric,  positive definite or  positive semi-definite.  We take $\mathcal{L}_{i,M+1}=0$ for $1\leq i\leq M$,  and furthermore, we take $\mathcal{L}_{M+1,M+1}=\mathcal{K}T^2$, where $\mathcal{K}$ is the  Fourier thermal conductivity coefficient of the mixture. As a result, we obtain 
 the heat flux and the diffusion flux  of component $i$  as  
\begin{equation}\label{eqMultiCompononentMassConserveDiffusion}
 \q=-\mathcal{K}\grad{T},~~~ \bm J_{i}=-\sum_{j=1}^M\mathcal{L}_{ij}\nabla \frac{\mu_j+M_{w,j}gh}{T} ,~~i=1,\cdots, M.
\end{equation}
 To determine the diffusion flux $\bm J_{i}$ precisely,  we give two typical choices of $\mathcal{L}_{ij}~(1\leq i,j\leq M)$, which correspond to molar-average velocity and mass-average velocity respectively. 
\begin{description}
\item[(J1)]  In the first case,    the diffusion mobility parameters are taken as 
\begin{equation}\label{eqMultiCompononentMassConserveDiffusionChoiceA}
\mathcal{L}_{ii} = \sum_{j=1}^M\frac{\mathcal{D}_{ij} n_in_j}{nR},~~~~~\mathcal{L}_{ij} = -\frac{\mathcal{D}_{ij} n_in_j}{nR},~~j\neq i, ~~1\leq i,j\leq M,
\end{equation}
where $R$ stands for the universal gas constant and   the mole diffusion coefficients $\mathcal{D}_{ij}$ satisfy $\mathcal{D}_{ii}=0$ and $\mathcal{D}_{ij}=\mathcal{D}_{ji}>0$ for $i\neq j$.  In this case, we have  $\sum_{i=1}^M\J_i=0$, which means that the reference velocity  is  the molar-average velocity.
\item[(J2)]  In the second case,  we take
\begin{equation}\label{eqMultiCompononentMassConserveDiffusionChoiceB}
\mathcal{L}_{ii} = \sum_{j=1}^M\frac{\mathscr{D}_{ij}n_i\rho_j}{M_{w,i}\rho R},~~~~~\mathcal{L}_{ij} = -\frac{\mathscr{D}_{ij}n_in_j}{\rho R},~~j\neq i, ~~1\leq i,j\leq M,
\end{equation}
where    $\mathscr{D}_{ij}$ are the mass diffusion coefficients satisfying  $\mathscr{D}_{ii}=0$ and $\mathscr{D}_{ij}=\mathscr{D}_{ji}>0$ for $i\neq j$.   In this case, it holds that  $\sum_{i=1}^M M_{w,i}\J_i=0$,  so the reference velocity becomes  the mass-average velocity.
\end{description}

 It is easy to prove that the above choices of $\bm{\mathcal{L}}$ is symmetric   positive semidefinite, and consequently,  they obey Onsager's reciprocal principle \cite{Groot2015NET} and the second law of thermodynamics.

\section{Derivations of the model}
In this section,  we show the rigorous  derivations of the model equations given in Section \ref{secModel}.  The component mass balance equations \eqref{eqGeneralNSEQMass} are assumed to hold with general reference velocity, but the mass diffusion fluxes are undetermined.  The formulations of the mass diffusion fluxes, the momentum balance equation and total energy conservation equation will be derived using the laws of thermodynamics and Onsager's reciprocal principle.

 \subsection{Primary thermodynamical equations}
In a  time-dependent volume $V(t)$,  we define the internal energy  and  kinetic energy  ($E$) within $V(t)$ as
\begin{eqnarray}\label{eqDefKineticEnergy}
  U=\int_{V(t)} \vartheta dV,~~~E=\frac{1}{2}\int_{V(t)}\rho|\u|^2dV .
  \end{eqnarray}
 The gravitational potential energy   has the form
\begin{eqnarray}\label{eqGravitationalPotentialEnergy}
   H =\int_{V(t)} \rho gh dV.
\end{eqnarray}
We recall 
the first law of thermodynamics   
\begin{eqnarray}\label{eqFirstThermodynamicsLawUE}
    \frac{d(U+E+H)}{dt}=\frac{d_\_W}{dt}+\frac{d_\_Q}{dt},
\end{eqnarray}
where    $W$ is the work done by the  face force $\F_t$, and $Q$ is the heat transfer from external environment of $V(t)$. The work done by $\F_{t}$     is expressed as
\begin{eqnarray*}\label{eqMultiCompononentFirstThermodynamicsLaw07}
  \frac{d_\_W}{dt}=   \int_{\partial V(t)} \F_{t}\cdot\u d\bm s .
\end{eqnarray*}
Cauchy's relation between face force $\F_{t}$ and the stress tensor $\bm\sigma$ of  component $i$  gives $\F_{t}=-\bm\sigma\cdot\bm\nu$, and as a result, 
\begin{equation}\label{eqReynoldsMultiCompWork}
  \frac{d_\_W}{dt}= - \int_{\partial V(t)} \(\bm\sigma\cdot\bm\nu\)\cdot\u d\bm s = - \int_{ V(t)} \div\(\bm\sigma\cdot\u\) dV ,
\end{equation}
where $\bm\nu$ is the unit normal vector towards  the outside  of  $V(t)$.  We note that  the other external forces   are ignored in this work, but the model derivations can be easily  extended to the cases in the presence of additional external forces. 
The   heat flux is   expressed 
as  
\begin{eqnarray}\label{eqHeatTransfer} 
  \frac{d_\_Q}{dt}=-\int_{\partial V(t)}\bm\phi_q\cdot\bm\nu d\bm s=-\int_{ V(t)}\div\bm\phi_q  dV,
\end{eqnarray}
where $\bm\phi_q$ represents the general heat flux.

Applying the Reynolds transport theorem and the Gauss divergence theorem,  we deduce that
\begin{eqnarray}\label{eqReynoldsMultiComp01}
  \frac{dU}{dt}&=&\int_{V(t)}\frac{\partial \vartheta}{\partial t}dV+\int_{V(t)}\div\(\u \vartheta\)dV\nonumber\\
&=&\int_{V(t)}\(\frac{\partial f}{\partial t}+\div\(\u f\)\)dV+\int_{V(t)}T\(\frac{\partial s}{\partial t}+\div\(\u s\)\)dV\nonumber\\
 &&+\int_{V(t)}s\(\frac{\partial T}{\partial t}+\u \cdot\grad T\)dV,
\end{eqnarray}
where we have also used the relation $\vartheta=f+Ts$.
We can also derive that
\begin{eqnarray}\label{eqReynoldsMultiComp03a}
  \frac{dE}{dt}
  &=&\frac{1}{2}\int_{V(t)}\frac{\partial \(\rho \u\cdot\u\)}{\partial t}dV+\frac{1}{2}\int_{V(t)}\div\(\u \(\rho \u\cdot\u\)\)dV\nonumber\\
   &=&\int_{V(t)}\(\rho \u\cdot\frac{\partial \u}{\partial t}+\frac{1}{2}\u\cdot\u\frac{\partial\rho}{\partial t}\)dV\nonumber\\
   &&+\frac{1}{2}\int_{V(t)}\big(\(\rho \u\cdot\u\)\div\u+\(\u\cdot\u\)\u\cdot\grad\rho+ \rho\u\cdot\grad\( \u\cdot\u\)\big)dV\nonumber\\
   &=&\int_{V(t)}\(\rho \u\cdot\frac{\partial \u}{\partial t}+\frac{1}{2}\u\cdot\u\frac{\partial\rho}{\partial t}\)dV\nonumber\\
   &&+\frac{1}{2}\int_{V(t)}\big(\(\u\cdot\u\)\div\(\rho\u\)+ 2\rho\u\cdot\(\u\cdot\grad\u\)\big)dV\nonumber\\
   &=&\int_{V(t)}\rho\u\cdot\(\frac{\partial \u}{\partial t}+\u\cdot\grad\u\)dV+\frac{1}{2}\int_{V(t)}\u\cdot\u\(\frac{\partial\rho}{\partial t}+\div\(\rho\u\)\)dV.
\end{eqnarray}
On the other hand,   the following  overall mass balance equation can be obtained from the component mass balance equations \eqref{eqGeneralNSEQMass} 
\begin{eqnarray}\label{eqMultiCompononentMassConserveMass}
\frac{\partial \rho}{\partial t}+\div(\rho\u)+\sum_{i=1}^MM_{w,i}\div\J_i=0.
\end{eqnarray} 
 Substituting   \eqref{eqMultiCompononentMassConserveMass}   into \eqref{eqReynoldsMultiComp03a} yields
\begin{eqnarray}\label{eqReynoldsMultiComp03}
  \frac{dE}{dt}
  &=&\int_{V(t)}\u\cdot\(\rho\frac{D \u}{D t}+\sum_{i=1}^MM_{w,i}\J_i\cdot\grad\u\)dV\nonumber\\
  &&-\frac{1}{2}\int_{V(t)}\sum_{i=1}^MM_{w,i}\div\( |\u|^2\J_i\) dV,
\end{eqnarray}
where  $\frac{D \u}{D t}=\frac{\partial  \u}{\partial t}+ \u\cdot\grad{ \u}.$

 Substituting \eqref{eqReynoldsMultiCompWork}, \eqref{eqHeatTransfer}, \eqref{eqReynoldsMultiComp01}, \eqref{eqReynoldsMultiComp03} into  \eqref{eqFirstThermodynamicsLawUE},  and taking into account   the arbitrariness of $V(t)$, we obtain
\begin{eqnarray}\label{eqinitialTotalenergyEqn01}
 &&T\(\frac{\partial s}{\partial t}+\div(\u s)\)+s\frac{D T}{D t}+\frac{\partial  (f+\rho gh)}{\partial t}+\div\(\u  (f+\rho gh)\)\nonumber\\
&&~~
 +\u\cdot\(\rho\frac{D \u}{D t}+\sum_{i=1}^MM_{w,i}\J_i\cdot\grad\u\)-\frac{1}{2}\sum_{i=1}^MM_{w,i}\div\( |\u|^2\J_i\)\nonumber\\
&&~~ =-\div\bm\phi_q - \div\(\bm\sigma\cdot\u\),
\end{eqnarray}
where $\frac{D T}{D t}$ is the material derivative as $$\frac{D T}{D t}=\frac{\partial T}{\partial t}+\u\cdot\grad{T}.$$
Furthermore,    we rewrite \eqref{eqinitialTotalenergyEqn01} as
\begin{eqnarray}\label{eqEntropyMultiGrad01}
 \frac{\partial s}{\partial t}+\div(\u s) &=&-\frac{1}{T}\div\bm\phi_q - \frac{1}{T}\bm\sigma^T:\grad\u-\frac{s}{T}\frac{D T}{D t}\nonumber\\
 &&
 -\frac{1}{T}\(\frac{\partial  (f+\rho gh)}{\partial t}+\div\(\u  (f+\rho gh)\)\)\nonumber\\
 &&+\frac{1}{2T}\sum_{i=1}^MM_{w,i}\div\( |\u|^2\J_i\)\nonumber\\
 &&-\frac{\u}{T}\cdot\(\rho\frac{D \u}{D t}+\sum_{i=1}^MM_{w,i}\J_i\cdot\grad\u+\div\bm\sigma\)
.
\end{eqnarray}
 
 \subsection{Entropy  equation}
In order to derive the model equations using the second law of thermodynamics, we will deduce an entropy equation, which consists of conservative  and   non-negative terms. For this purpose,  we  need to derive the transport equation of Helmholtz free energy density to reduce \eqref{eqEntropyMultiGrad01}. 
 
 We define  ${\gamma}=\gamma_b+\gamma_\grad$, where $\gamma_b=\(\frac{\partial f_b}{\partial T}\)_\n$  and $$\gamma_\grad = \(\frac{\delta f_\grad}{\delta T}\)_\n=\frac{1}{2} \sum_{i,j=1}^M\frac{\partial c_{ij}}{\partial T}\nabla n_i\cdot\nabla n_j.$$
 
Firstly, 
using   the component mass balance equations, we  derive the transport equation of  $f_b$   as
\begin{eqnarray}\label{eqMultiCompononentHelmholtzMultiTransport01}
 \frac{\partial f_b}{\partial t}+\div(f_b\u)
 &=&\sum_{i=1}^M\mu_i^b\frac{\partial n_i}{\partial t}+\gamma_b\frac{\partial T}{\partial t}+f_b\div\u+\u\cdot\grad f_b\nonumber\\
 &=&-\sum_{i=1}^M\mu_i^b\(\div(n_i\u)+\div\J_i\)+\gamma_b\frac{\partial T}{\partial t}\nonumber\\
 &&+f_b\div\u
 +\sum_{i=1}^M\u\cdot\mu_i^b\grad n_i+ \u\cdot\gamma_b\grad T\nonumber\\
 &=&-\sum_{i=1}^M\(\mu_i^bn_i\div \u +\u\cdot\mu_i^b\grad n_i+\mu_i^b\div\J_i\)\nonumber\\
 &&+\gamma_b\frac{D T}{D t}+f_b\div\u
 +\sum_{i=1}^M\u\cdot\mu_i^b\grad n_i \nonumber\\
 &=&-p_b\div\u-\sum_{i=1}^M\mu_i^b\div\J_i+\gamma_b\frac{D T}{D t}.
\end{eqnarray}
For the gradient contribution of Helmholtz free energy density, we  can derive 
\begin{eqnarray}\label{eqMultiCompononentHelmholtzMultiTransport02}
\frac{\partial f_\grad}{\partial t}
 &=&\frac{1}{2}\frac{\partial\(\sum_{i,j=1}^M c_{ij} \nabla n_i\cdot\nabla n_j\)}{\partial{t}}\nonumber\\
 &=&\frac{1}{2}\sum_{i,j=1}^M \frac{\partial c_{ij}}{\partial T}\frac{\partial T}{\partial t} \nabla n_i\cdot\nabla n_j+\frac{1}{2}\sum_{i,j,k=1}^N\frac{\partial c_{jk}}{\partial n_i} \frac{\partial n_i}{\partial t}\nabla n_j\cdot\nabla n_k\nonumber\\
 &&+\sum_{i,j=1}^M c_{ij} \nabla\frac{\partial n_i}{\partial{t}}\cdot\nabla  n_j\nonumber\\
 &=&\gamma_\grad\frac{\partial T}{\partial t}- \frac{1}{2}\sum_{i,j,k=1}^N\frac{\partial c_{jk}}{\partial n_i} \(\div(n_i\u)+\div\J_i\)\nabla n_j\cdot\nabla n_k\nonumber\\
 &&-\sum_{i,j=1}^M c_{ij} \nabla n_j\cdot\nabla\big(\div(n_i\u)+\div\J_i\big)\nonumber\\
&=&\gamma_\grad\frac{\partial T}{\partial t}-\sum_{i,j=1}^M\div\big(\(\div\(\u n_i\)\)c_{ij}\grad{n_j}\big)\nonumber\\
&&- \frac{1}{2}\sum_{i,j,k=1}^N\frac{\partial c_{jk}}{\partial n_i} \(n_i\div\u+\u\cdot\grad n_i+\div\J_i\)\nabla n_j\cdot\nabla n_k\nonumber\\
 &&+\sum_{i,j=1}^M\div\(\u n_i\)\div\(c_{ij}\grad{n_j}\)-\sum_{i,j=1}^M c_{ij} \nabla n_j\cdot\nabla\(\div\J_i\)\nonumber\\
&=&\gamma_\grad\frac{\partial T}{\partial t}-\sum_{i,j=1}^M\div\big(\(\div\(\u n_i\)\)c_{ij}\grad{n_j}\big)\nonumber\\
 &&- \frac{1}{2}\sum_{i,j,k=1}^N\frac{\partial c_{jk}}{\partial n_i} \(n_i\div\u+\u\cdot\grad n_i+\div\J_i\)\nabla n_j\cdot\nabla n_k\nonumber\\&&+\sum_{i,j=1}^Mn_i\(\div\u \)\div\(c_{ij}\grad{n_j}\)
+\sum_{i,j=1}^M\(\u\cdot\grad{n_i}\)\div\(c_{ij}\grad{n_j}\)\nonumber\\
 &&-\sum_{i,j=1}^M  \div\big(\(\div\J_i\)c_{ij}\grad{n_j}\big)+\sum_{i,j=1}^M  \(\div\J_i\)\div\(c_{ij}\grad{n_j}\),
 \end{eqnarray}
 and
 \begin{align}\label{eqMultiCompononentHelmholtzMultiTransport03}
\div(f_\grad\u)
 &=\frac{1}{2}\div\(\u\sum_{i,j=1}^M c_{ij} \nabla n_i\cdot\nabla n_j\)\nonumber\\
 &=\frac{1}{2}\(\sum_{i,j=1}^M c_{ij} \nabla n_i\cdot\nabla n_j\)\div\u+\frac{1}{2}\u\cdot\grad\(\sum_{i,j=1}^M c_{ij} \nabla n_i\cdot\nabla n_j\).
 \end{align}
 Combining  \eqref{eqMultiCompononentHelmholtzMultiTransport01}-\eqref{eqMultiCompononentHelmholtzMultiTransport03}, we deduce the transport equation of the   Helmholtz free energy $ f$  as
\begin{eqnarray}\label{eqMultiCompononentHelmholtzMultiTransport05}
 \frac{\partial  f}{\partial t}+\div( f\u)
 &=&\frac{\partial f_b}{\partial t}+\div(f_b\u)+\frac{\partial f_\grad}{\partial t}+\div(f_\grad\u)\nonumber\\
 &=&\gamma_b\frac{D T}{D t}-p_b\div\u-\sum_{i=1}^M\mu_i^b\div\J_i\nonumber\\
 &&+\gamma_\grad\frac{\partial T}{\partial t}-\sum_{i,j=1}^M\div\big(\(\div\(\u n_i\)\)c_{ij}\grad{n_j}\big)\nonumber\\
 &&- \frac{1}{2}\sum_{i,j,k=1}^N\frac{\partial c_{jk}}{\partial n_i} \(n_i\div\u+\u\cdot\grad n_i+\div\J_i\)\nabla n_j\cdot\nabla n_k\nonumber\\
 &&+\sum_{i,j=1}^M\(\div\u \)n_i\div\(c_{ij}\grad{n_j}\)
+\sum_{i,j=1}^M\(\u\cdot\grad{n_i}\)\div\(c_{ij}\grad{n_j}\)\nonumber\\
 &&-\sum_{i,j=1}^M  \div\big(\(\div\J_i\)c_{ij}\grad{n_j}\big)+\sum_{i,j=1}^M  \(\div\J_i\)\div\(c_{ij}\grad{n_j}\)\nonumber\\
 &&+\frac{1}{2}\(\sum_{i,j=1}^M c_{ij} \nabla n_i\cdot\nabla n_j\)\div\u+\frac{1}{2}\u\cdot\grad\(\sum_{i,j=1}^M c_{ij} \nabla n_i\cdot\nabla n_j\)\nonumber\\
&=&- p ~\div\u+\gamma_b\frac{D T}{D t}-\sum_{i=1}^M\mu_i\div\J_i+\gamma_\grad\frac{\partial T}{\partial t}\nonumber\\
 &&+\sum_{i,j=1}^M\div\(\frac{\partial n_i}{\partial t}c_{ij}\grad{n_j}\)
 - \frac{1}{2}\sum_{i,j,k=1}^N \(\u\cdot\grad n_i\) \frac{\partial c_{jk}}{\partial n_i}\nabla n_j\cdot\nabla n_k\nonumber\\
 &&+\sum_{i,j=1}^M\(\u\cdot\grad{n_i}\)\div\(c_{ij}\grad{n_j}\)+\frac{1}{2}\u\cdot\grad\(\sum_{i,j=1}^M c_{ij} \nabla n_i\cdot\nabla n_j\),\nonumber\\
\end{eqnarray}
where we have also used the formulation  of the general pressure $p$.
Using the  identity \eqref{eqidentity}, we deduce that
\begin{eqnarray}\label{eqMultiCompononentHelmholtzMultiTransport04}
&&\sum_{i,j=1}^M\(\grad{n_i}\)\div\(c_{ij}\grad{n_j}\)+\frac{1}{2}\grad\(\sum_{i,j=1}^M c_{ij} \nabla n_i\cdot\nabla n_j\)\nonumber\\
 &&~~=\sum_{i,j=1}^M\(\grad{n_i}\)\div\(c_{ij}\grad{n_j}\)+\frac{1}{2}\sum_{i,j=1}^M\(\nabla n_i\cdot\nabla n_j\) \grad{c}_{ij} \nonumber\\
 &&~~~~~+\frac{1}{2}\sum_{i,j=1}^M c_{ij} \grad\(\nabla n_i\cdot\nabla n_j\)\nonumber\\
 &&~~=\gamma_\grad\grad{T}  +\frac{1}{2}\sum_{i,j,k=1}^N \(\grad n_i \)\frac{\partial c_{jk}}{\partial n_i}\nabla n_j\cdot\nabla n_k+\sum_{i,j=1}^M\div c_{ij}\(\nabla n_i\otimes\nabla n_j\).
  \end{eqnarray}
 Substituting \eqref{eqMultiCompononentHelmholtzMultiTransport04} into  \eqref{eqMultiCompononentHelmholtzMultiTransport05}, we obtain
  \begin{eqnarray}\label{eqFinalMultiCompononentHelmholtzMultiTransport}
 \frac{\partial  f}{\partial t}+\div( f\u)
 &=&- p ~\div\u+ \gamma\frac{D T}{D t}-\sum_{i=1}^M\mu_i\div\J_i +\sum_{i,j=1}^M\div\(\frac{\partial n_i}{\partial t}c_{ij}\grad{n_j}\)\nonumber\\
 && +\u\cdot\(\sum_{i,j=1}^M\div c_{ij}\(\nabla n_i\otimes\nabla n_j\)\)
 .
\end{eqnarray}

The transport equation of the gravity potential energy can be easily  derived as
\begin{eqnarray}\label{eqMultiCompononentGravity01}
 \frac{\partial \rho gh}{\partial t}+\div(\rho gh\u)
 &=&gh\(\frac{\partial \rho}{\partial t}+\div(\rho\u)\)+\u\rho g\grad h\nonumber\\
 &=&-\sum_{i=1}^MM_{w,i}gh\div\J_i-\rho \g\cdot\u.
\end{eqnarray}
Substituting    \eqref{eqFinalMultiCompononentHelmholtzMultiTransport} and \eqref{eqMultiCompononentGravity01}  into \eqref{eqEntropyMultiGrad01},  we  reduce the entropy balance equation  as
\begin{align}\label{eqEntropyMultiGrad02}
 &\frac{\partial s}{\partial t}+\div(\u s) =-\div\(\frac{\bm\phi_q}{T}\)+\bm\phi_q\cdot \grad\frac{1}{T}  - \frac{1}{T}\(\bm\sigma-p\I\):\grad\u-\frac{s+\gamma}{T}\frac{D T}{D t}\nonumber\\
&~~
   +\frac{1}{2}\sum_{i=1}^MM_{w,i}\div\( \frac{1}{T}|\u|^2\J_i\)-\frac{1}{2}\sum_{i=1}^MM_{w,i}|\u|^2\J_i\cdot\grad \frac{1}{T}\nonumber\\
 &~~ +\sum_{i=1}^M\div\(\frac{\mu_i+M_{w,i}gh}{T} \J_i\)-\sum_{i,j=1}^M\div\(\frac{1}{T}\frac{\partial n_i}{\partial t}c_{ij}\grad{n_j}\)\nonumber\\
 &~~+\sum_{i,j=1}^M\(c_{ij}\frac{\partial n_i}{\partial t}\grad{n_j}\)\cdot \grad\frac{1}{T}  -\sum_{i=1}^M\J_i\cdot\grad \frac{\mu_i+M_{w,i}gh}{T}\nonumber\\
 &~~-\frac{\u}{T}\cdot\(\rho\frac{D \u}{D t}+\sum_{i=1}^MM_{w,i}\J_i\cdot\grad\u+\div\bm\sigma+\sum_{i,j=1}^M\div c_{ij}\(\nabla n_i\otimes\nabla n_j\)-\rho\g\).
\end{align}

\subsection{Physical principles of the model derivations}

The second law of thermodynamics  states that  the  total entropy can never decrease over time for an isolated system, so the non-conservative terms in  \eqref{eqEntropyMultiGrad02} shall be non-negative. The fourth term on the right-hand side of \eqref{eqEntropyMultiGrad02} will vanish due to the relation $s=-\gamma$ given in \eqref{eqEntropyHelmholtz02}. Thanks to Galilean invariance, we must formulate the  momentum conservation equation  as the form of the equation \eqref{eqGeneralNSEQ} with an undetermined stress tensor,
thereby making  the last non-conservative term on the right-hand side of \eqref{eqEntropyMultiGrad02} disappear. In order  to find the detailed formulation of the stress tensor, we consider the third term  on the right-hand side of \eqref{eqEntropyMultiGrad02}, which is a non-conservative term and thus shall be non-negative. This term shall vanish for the reversible process, and in this case,  the total stress   only contains the reversible part, i.e., the pressure  $p\I$. For the realistic viscous fluids,   in terms of Newtonian fluid theory, the Cauchy stress tensor $\bm\tau\(\u\)$ given in \eqref{eqTotalStressB} shall be included in  the total stress, and consequently, we get $\bm\sigma=p\I-\bm\tau\(\u\)$, which ensures that the third term  on the right-hand side of \eqref{eqEntropyMultiGrad02} is  always non-negative
\begin{eqnarray}\label{eqTotalStressCondition}
   - \frac{1}{T}\(\bm\sigma-p\I\):\grad\u=\frac{1}{T}\bm\tau\(\u\):\grad\u=\frac{\lambda}{T}|\div\u|^2+\frac{\eta}{2T}|\bm\varepsilon(\u)|^2\geq0.
\end{eqnarray}
So far the momentum balance equation reaches its  complete form \eqref{eqGeneralNSEQ} with the stress tensor \eqref{eqTotalStressB}.

 W define the  heat flux
\begin{eqnarray}\label{eqGenHeatFlux}
\q&=&\bm\phi_q-\frac{1}{2}\sum_{i=1}^MM_{w,i}|\u|^2\J_i+\sum_{i,j=1}^Mc_{ij}\frac{\partial n_i}{\partial t}\grad{n_j} .
\end{eqnarray}
The fluxes $\q$ and $\J_i$ may rely on $\grad \frac{\mu_i}{T}$ and $\grad \frac{1}{T}$ according to Curie's Principle.  Onsager's  principle suggests that $\q$ and $\J_i$ have a linear dependent relationship  with $\grad \frac{\mu_i}{T}$ and $\grad \frac{1}{T}$ and moreover, this linear  relationship  shall be symmetric. So  there exists the linear, symmetric  relationship given in \eqref{eqDiffusionHeatFluxes}.
To obey the second law of thermodynamics,   the sum of  two related  non-conservative terms shall be non-negative, i.e.,
\begin{equation}
\q\cdot \grad\frac{1}{T}   -\sum_{i=1}^M\J_i\cdot\grad \frac{\mu_i+M_{w,i}gh}{T}\geq0,
\end{equation}
 which requires that the mobility matrix $\bm{\mathcal{L}}$ given in \eqref{eqDiffusionHeatFluxes} must be    positive definite or  positive semi-definite.  
 
 We rewrite \eqref{eqinitialTotalenergyEqn01}  as 
 \begin{eqnarray}\label{eqinitialTotalenergyEqn02}
 \frac{\partial  e_t}{\partial t}+\div\(\u  e_t+\bm\sigma\cdot\u\) =-\div\bm\phi_q ,
\end{eqnarray}
which leads to  the total energy conservation equation \eqref{eqTotalEnergyConservation} taking into account  \eqref{eqGenHeatFlux}.

Finally, the entropy equation \eqref{eqEntropyMultiGrad02} is reduced into the following form
\begin{eqnarray}\label{eqEntropyeqfinal}
 \frac{\partial s}{\partial t}+\div(\u s) 
 &=&-\div\frac{\q}{T}+\q\cdot \grad\frac{1}{T}  + \frac{1}{T}\bm\tau\(\u\):\grad\u\nonumber\\
 &&+\sum_{i=1}^M\div\(\frac{\mu_i+M_{w,i}gh}{T} \J_i\) -\sum_{i=1}^M\J_i\cdot\grad \frac{\mu_i+M_{w,i}gh}{T}\nonumber\\
 &=&-\frac{1}{T}\div \q  + \frac{1}{T}\bm\tau\(\u\):\grad\u +\sum_{i=1}^M\frac{\mu_i+M_{w,i}gh}{T}\div \J_i.
\end{eqnarray}

The above model equations are established such that the entropy production (non-conservative) terms in \eqref{eqEntropyeqfinal} are all non-negative, thereby obeying the second law of thermodynamics.

\subsection{The proof of the relation \eqref{eqRelationBetweenChPtlTempPres}}\label{secRelationBetweenChPtlTempPres}

Using the formulations of the pressure and chemical potentials,  we can deduce 
\begin{eqnarray}\label{eqPresChptlMultiGradproof01}
\sum_{i=1}^Mn_i\grad \mu_i-\grad p 
&=&\sum_{i=1}^Mn_i\grad\mu_i^b-\grad p_b-\sum_{i=1}^Mn_i\grad\(\sum_{j=1}^M\div\(c_{ij}\grad{n_j}\)\)\nonumber\\
 &&+\sum_{i,j=1}^M\grad\big(n_i\div\(c_{ij}\grad{n_j}\)\big)+\frac{1}{2} \sum_{i,j=1}^M\grad\(c_{ij} \nabla n_i\cdot\nabla n_j\)\nonumber\\
 &&- \frac{1}{2}\sum_{i,j,k=1}^N(\grad n_i)\frac{\partial c_{jk}}{\partial n_i} \nabla n_j\cdot\nabla n_k\nonumber\\
 &=&\gamma_b\grad T+\sum_{i,j=1}^M\(\div\(c_{ij}\grad{n_j}\)\)\grad n_i- \frac{1}{2}\sum_{i,j,k=1}^N(\grad n_i)\frac{\partial c_{jk}}{\partial n_i} \nabla n_j\cdot\nabla n_k\nonumber\\
 &&+\frac{1}{2} \sum_{i,j=1}^M\(\grad c_{ij}\) \(\nabla n_i\cdot\nabla n_j\)+\frac{1}{2} \sum_{i,j=1}^Mc_{ij}\grad\( \nabla n_i\cdot\nabla n_j\)\nonumber\\
 &=&\gamma_b\grad T+\gamma_\grad\grad{T}+\div\(\sum_{i,j=1}^Mc_{ij}\grad n_i\otimes\grad n_j\),
\end{eqnarray}
where  we have also used the following identity 
\begin{equation}\label{eqidentity}
\sum_{i,j=1}^M\(\grad{n_i}\)\div\(c_{ij}\grad{n_j}\)+\frac{1}{2}\sum_{i,j=1}^M c_{ij} \grad\(\nabla n_i\cdot\nabla n_j\) =\sum_{i,j=1}^M\div c_{ij}\(\nabla n_i\otimes\nabla n_j\).
  \end{equation}
Consequently,
we obtain the  relation \eqref{eqRelationBetweenChPtlTempPres} taking into account $s=-\gamma$.

\section{Entropy stable     numerical method}
We have already shown that the proposed model obeys the  first and  second laws of thermodynamics.  In this section, we focus on developing   the semi-implicit time marching scheme preserving  these laws of thermodynamics.  The energy conservation equation will be solved in the proposed scheme, and thus, the first law of thermodynamics can be naturally satisfied. The key effort put in designing the efficient  numerical scheme is how to preserve the second law of thermodynamics, which states that the total entropy can never decrease over time for an isolated system. Here, the  entropy stability of a numerical scheme means that it obeys the second law of thermodynamics  at the time-discrete level.

We will develop an entropy stable numerical method based on the use of an  intermediate velocity and the convex-concave splitting of  Helmholtz free energy density with respect to molar density and temperature. The convex-concave splitting approaches  usually leading  to the  energy-dissipation feature for numerical simulation of  the system with a constant temperature will be proved to result in the entropy stability of a numerical scheme for the  non-isothermal systems. A great challenge in designing the entropy-stable scheme is the very tightly coupling relationship among  molar density, energy (temperature)  and velocity, the treatment of which requires very careful  mathematical and physical observations.

 \subsection{Convex-concave splitting of  Helmholtz free energy density}
 
 We analyze the convex-concave splitting of  Helmholtz free energy density based on Peng-Robinson equation of state, which is   widely  used in the oil reservoir and chemical engineering.
The gradient term of Helmholtz free energy density is always convex with respect to molar densities.   For the bulk Helmholtz free energy density for  the multi-component mixture, there are a few approaches proposed in \cite{fan2017componentwise,kou2018nvtpc,kouandsun2017modeling}  to get a strict convex-concave splitting scheme.  Here,  we follow the approach proposed in \cite{kouandsun2017modeling} to introduce  a stabilization  term
 \begin{eqnarray}\label{eqHelmholtzEnergy_a0_02R}
 f_b^{\textnormal{stab}}(\n,T)=RT\sum_{i=1}^{M}n_i\(\ln n_i-1\)-RT\sum_{i=1}^Mn_i\ln\(1-b_in_i\),
\end{eqnarray}
which has a diagonal positive definite Hessian matrix, and thus is convex with respect to molar densities. Let us introduce a stabilization parameter $\theta\geq0$, and then we reformulate the bulk Helmholtz free energy density   as a sum of the convex part $f_b^{\textnormal{convex}}$ and the concave part $f_b^{\textnormal{concave}}$
\begin{subequations}\label{eqConvexConcaveHelmholtzEnergy}
\begin{equation}\label{eqConvexConcaveHelmholtzEnergy01}
    f_b(\n,T)= f_b^{\textnormal{convex}}(\n,T) +f_b^{\textnormal{concave}}(\n,T),
\end{equation}
\begin{equation}\label{eqConvexHelmholtzEnergy}
    f_b^{\textnormal{convex}}(\n,T)= f_b^{\textnormal{ideal}}(\n,T) +f_b^{\textnormal{repulsion}}(\n,T)+\theta f_b^{\textnormal{stab}}(\n,T),
\end{equation}
\begin{equation}\label{eqConcaveHelmholtzEnergy}
    f_b^{\textnormal{concave}}(\n,T)= f_b^{\textnormal{attraction}}(\n,T)-\theta f_b^{\textnormal{stab}}(\n,T) ,
\end{equation}
\end{subequations}
where the detailed formulations of $f_b^{\textnormal{ideal}}, ~f_b^{\textnormal{repulsion}}$ and $ f_b^{\textnormal{attraction}}$ can be found in Appendix.
The chemical potentials can be reformulated accordingly
\begin{eqnarray}\label{eqConvexChPtl}
    \mu_i^b(\n,T)= \mu_i^{b,\textnormal{convex}}(\n,T) +\mu_i^{b,\textnormal{concave}}(\n,T),
\end{eqnarray}
where 
\begin{equation}\label{eqDefConvexChPtl}
    \mu_i^{b,\textnormal{convex}}(\n,T)=\frac{\partial f_b^{\textnormal{convex}}(\n,T)}{\partial n_i}, ~~~\mu_i^{b,\textnormal{concave}}(\n,T)=\frac{\partial f_b^{\textnormal{concave}}(\n,T)}{\partial n_i}.
\end{equation}

We now turn to consider the convex-concave property of Helmholtz free energy density with respect to the temperature.  It is noted that $ \psi_i^p$ in the formulation of $f_b$ is  the molar heat capacity of  the ideal gas at the constant pressure. We  recall  the following thermodynamical relation  for  the ideal gas
\begin{eqnarray}\label{eqRelationHeatCapacity}
 \psi_i^p=R+ \psi_{i}^v,~~i=1,\cdots,M,
 \end{eqnarray}
where $R$ is the  universal gas constant and $\psi_{i}^v>0$ is the molar heat capacity of component $i$ at the constant volume for the ideal gas.
\begin{lem}\label{lemConvex}
The second derivative of the bulk Helmholtz free energy density $f_b$ with respective to the temperature  satisfies  
\begin{equation}\label{eqD2fbT}
\(\frac{\partial^2 f_b(\n,T)}{\partial T^2}\)_\n\leq0,
\end{equation}
and thus $f_b$ is concave with respect to the temperature.
\end{lem}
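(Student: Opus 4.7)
The plan is to decompose the bulk Helmholtz free energy density as $f_b = f_b^{\textnormal{ideal}} + f_b^{\textnormal{repulsion}} + f_b^{\textnormal{attraction}}$ following the Peng-Robinson formulas collected in the Appendix, fix $\n$, and show that each of the three pieces contributes non-positively to $\partial_T^2 f_b$. I would begin with the ideal-gas contribution, which is the only piece with a genuinely nonlinear dependence on $T$ through the heat capacities. Writing it in the standard form $f_b^{\textnormal{ideal}}(\n,T) = RT \sum_i n_i (\ln n_i - 1) + \sum_i n_i [h_i^0(T) - T s_i^0(T)]$, with $(h_i^0)'(T) = \psi_i^p$ and $(s_i^0)'(T) = \psi_i^p / T$, a direct differentiation gives
\begin{equation*}
\frac{\partial^2 f_b^{\textnormal{ideal}}}{\partial T^2} \;=\; -\sum_{i=1}^M \frac{n_i \,\psi_i^p}{T} \;\le\; 0,
\end{equation*}
where non-positivity follows because $n_i \geq 0$ and $\psi_i^p = R + \psi_i^v > 0$ by \eqref{eqRelationHeatCapacity}.

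Next, I would dispose of the Peng-Robinson repulsion term, which has the form $f_b^{\textnormal{repulsion}} = RT\,\Phi(\n)$ with $\Phi$ depending only on $\n$ and the co-volumes $b_i$. Being linear in $T$ at fixed $\n$, it contributes $\partial_T^2 f_b^{\textnormal{repulsion}} = 0$, and in particular does not spoil concavity.

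The main work lies in the attraction piece, which factorizes as $f_b^{\textnormal{attraction}}(\n, T) = -a(\n, T)\,G(\n)$, where $G(\n) \geq 0$ is the Peng-Robinson volume-translation logarithm in the $b_i$, and $a(\n,T) = \sum_{i,j=1}^M n_i n_j (1-k_{ij}) \sqrt{a_i(T) a_j(T)}$ is the usual van der Waals mixing rule with Soave-type temperature dependence $\sqrt{a_i(T)} = \sqrt{a_i^c}\, u_i(T)$, $u_i(T) = 1 + m_i(1-\sqrt{T/T_{c,i}})$. Since $G(\n) \geq 0$, concavity of $f_b^{\textnormal{attraction}}$ in $T$ is equivalent to convexity of $a(\n, T)$ in $T$. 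I would exploit the fact that each $u_i$ is positive, decreasing, and convex (because $-\sqrt{T}$ is convex), so that for every pair $(i,j)$
\begin{equation*}
(u_i u_j)''(T) \;=\; u_i''(T) u_j(T) + 2\, u_i'(T) u_j'(T) + u_i(T) u_j''(T) \;\ge\; 0,
\end{equation*}
all three terms being non-negative (the cross term because $u_i',u_j' \leq 0$). Combining these pairwise products with the non-negative weights $n_i n_j (1-k_{ij}) \sqrt{a_i^c a_j^c}$ preserves convexity, so $a(\n,T)$ is convex in $T$, and hence $f_b^{\textnormal{attraction}}$ is concave in $T$. Summing the three contributions yields $\partial_T^2 f_b \le 0$.

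The main obstacle I anticipate is indeed the attraction term, because $\sqrt{a_i(T) a_j(T)}$ is not separately convex in the obvious variables and a naive sign chase on the Soave $\alpha$-function is easy to get wrong. The clean resolution is the product-of-non-negative-decreasing-convex-functions identity above, which depends crucially on the sign pattern $u_i > 0$, $u_i' \leq 0$, $u_i'' \geq 0$ induced by the specific Peng-Robinson form of $\alpha_i(T)$. A minor side issue is that if the formulation in the Appendix treats $\psi_i^p$ as temperature-dependent, the ideal-gas computation still goes through verbatim provided $\psi_i^p(T) > 0$, which \eqref{eqRelationHeatCapacity} guarantees.
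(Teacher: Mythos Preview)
Your strategy matches the paper's: split $f_b$ into its ideal, repulsion, and attraction pieces and show each is concave in $T$, with the crux being $a''(T)\ge 0$. Two remarks are in order.

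First, your ideal-gas second derivative is slightly off. The Appendix formula for $f_b^{\textnormal{ideal}}$ contains the term $-\sum_i n_i RT\ln\bigl(P_0/(n_iRT)\bigr)$, which carries an $RT\ln T$ piece and contributes $+nR/T$ to $\partial_T^2 f_b^{\textnormal{ideal}}$. The correct result is therefore $nR/T-\sum_i n_i\psi_i^p/T=-\sum_i n_i\psi_i^v/T$, exactly as in the paper's computation \eqref{eqSecDerEntropy}. Your sign conclusion is unaffected (you already invoke $\psi_i^p>R$), but your ``standard form'' with $(s_i^0)'=\psi_i^p/T$ together with a separate $RT\sum_i n_i(\ln n_i-1)$ term does not quite reproduce the paper's expression; the $\ln T$ dependence has to live somewhere.

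Second, your treatment of the attraction term is cleaner than the paper's. The paper computes $a''(T)$ by brute force and verifies that each summand $A_{ij}>0$, whereas you factor $\sqrt{a_ia_j}\propto u_i(T)u_j(T)$ with $u_i(T)=1+m_i(1-\sqrt{T/T_{c,i}})$ and use the one-line identity $(u_iu_j)''=u_i''u_j+2u_i'u_j'+u_iu_j''\ge 0$, valid because each $u_i$ is positive, decreasing, and convex. This is the same conclusion reached more transparently. Both routes rely on $k_{ij}<1$ (for non-negative weights) and implicitly on $m_i\ge 0$, $u_i>0$, which hold for the physical parameter ranges the paper has in mind.
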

\begin{proof}
Let $n=\sum_{i=1}^Mn_i$. Fixing  molar densities, we calculate the second derivative of $f_b$ with respect to $T$ as
 \begin{eqnarray}\label{eqSecDerEntropy}
    \frac{\partial^2 f_b(\n,T)}{\partial T^2}&=&  \frac{nR}{T}-\sum_{i=1}^Mn_i\frac{\psi_{i}^p}{T} +\frac{na''(T)}{2\sqrt{2}b}\ln\(\frac{1+(1-\sqrt{2})b n}{1+(1+\sqrt{2})b n}\)\nonumber\\
    &=&  -\sum_{i=1}^Mn_i\frac{\psi_{i}^v}{T} +\frac{na''(T)}{2\sqrt{2}b}\ln\(\frac{1+(1-\sqrt{2})b n}{1+(1+\sqrt{2})b n}\),
\end{eqnarray}
where we have used the relation \eqref{eqRelationHeatCapacity}. The first term on the right-hand side of \eqref{eqSecDerEntropy}  is negative.  We note that the parameters $a_i$ and $b_i$ have defined in \eqref{eqaibi}. From the definition of $a(T)$ in \eqref{eqab}, we calculate
\begin{eqnarray*}
   a''(T)=\frac{1}{4}\sum_{i,j=1}^M y_i y_j(a_ia_j)^{-\frac{3}{2}}(1-k_{ij})A_{ij},
\end{eqnarray*}
\begin{eqnarray*}
   A_{ij}&=&\frac{m_ia_i^2a_j^2}{T\sqrt{TT_{c,i}}\(1+m_i(1-\sqrt{T_{r,i}})\)}+\frac{m_ja_i^2a_j^2}{T\sqrt{TT_{c,j}}\(1+m_j(1-\sqrt{T_{r,j}})\)}\nonumber\\
   &&+\frac{2m_im_ja_i^2a_j^2}{T\sqrt{T_{c,i}T_{c,j}}\(1+m_i(1-\sqrt{T_{r,i}})\)\(1+m_j(1-\sqrt{T_{r,j}})\)},
\end{eqnarray*}
where $T_{r,i}=T/T_{c,i}$, $T_{c,i}$ is the critical temperature and $m_i$ is the parameter given in \eqref{eqdefmi}.
It is obtained that $A_{ij}>0$ by the physical definitions of parameters, and thus we have $a''(T)>0$ taking into account   $k_{ij}<1$. Consequently, \eqref{eqD2fbT} is reached.
\end{proof}

For the gradient term of Helmholtz free energy density $f_\grad$, we take the constant influence parameters $c_{ij}$ in numerical tests  as usual practices in the dynamical van der Waals model. This choice makes the derivatives of $f_\grad$ with respect to the temperature zero when molar densities are fixed.  

It is easy to prove that Helmholtz free energy density based on van der Waals equation of state has  the property \eqref{eqD2fbT}, and can also be split into the sum of a convex part and a  concave part with respect to molar density. In the following,  we assume that  the influence parameters $c_{ij}$ may rely on the temperature but independent of molar densities and the convex-concave splitting can be reached  for the Helmholtz free energy density considered in this paper; more precisely, $f_b$ can be written as the form \eqref{eqConvexConcaveHelmholtzEnergy01} and $\(\frac{\partial^2 f(\n,T)}{\partial T^2}\)_\n\leq0$ always holds. 

\subsection{Semi-implicit time  scheme}
We now contruct the semi-implicit time marching scheme.  
We divide the  time interval $\mathcal{T}=(0,t_{f}]$, where $t_{f}>0$  into   $N$ subintervals $\mathcal{T}_{k}=(t_{k},t_{k+1}]$, where $t_{0}=0$ and $t_{N}=t_{f}$. The time step size is denoted as $\delta t_{k}=t_{k+1}-t_{k}$.  For a scalar function $v(t)$ or a vector function $\v(t)$, we denote by $v^{k}$ or $\v^{k}$ its approximation at the time $t_{k}$.

 For chemical potentials,    we treat  molar densities implicitly in their convex parts, while we apply the explicit treatment for their concave parts; meanwhile, the temperature is always implicit in both parts.  More precisely, the time discrete chemical potential of component $i$ at the $(k+1)$th time step is expressed as
\begin{subequations}\label{eqDiscretegeneralchemicalpotentialForm}
 \begin{equation}\label{eqDiscretegeneralchemicalpotentialForm01}
  \mu_i^{k+1}=\mu_i^{b,k+1}+\mu_{\grad,i}^{k+1}, 
  \end{equation}
   \begin{equation}\label{eqDiscretegeneralchemicalpotentialForm02}
  \mu_i^{b,k+1}=\mu_i^{b,\textnormal{convex}}(\n^{k+1},T^{k+1})+\mu_i^{b,\textnormal{concave}}(\n^{k},T^{k+1}), 
  \end{equation}
   \begin{equation}\label{eqDiscretegeneralchemicalpotentialForm03}
  \mu_{\grad,i}^{k+1}=-\sum_{j=1}^M\div\(c_{ij}^{k+1}\grad{n_j^{k+1}}\).
  \end{equation}
  \end{subequations}
  where $c_{ij}^{k+1}=c_{ij}(T^{k+1})$.
The relation parameters in the mass diffusion fluxes  and the  heat flux may rely on molar densities and temperature, and here, we take the following schemes 
\begin{subequations}\label{eqDiscreteDiffusionHeatFluxes}
\begin{equation}\label{eqDiscreteDiffusionHeatFluxes01}
\J_i^{k+1}=-\sum_{j=1}^M\mathcal{L}_{i,j}(\n^{k},T^{k})\grad \frac{\mu_j^{k+1}+M_{w,j}gh}{T^{k+1}} + \mathcal{L}_{i,M+1}(\n^{k},T^{k})\grad \frac{1}{T^{k+1}},
\end{equation}
\begin{equation}\label{eqDiscreteDiffusionHeatFluxes02}
\q^{k+1}=-\sum_{j=1}^M\mathcal{L}_{M+1,j}(\n^{k},T^{k})\grad \frac{\mu_j^{k+1}+M_{w,j}gh}{T^{k+1}} + \mathcal{L}_{M+1,M+1}(\n^{k},T^{k+1})\grad \frac{1}{T^{k+1}}.
\end{equation}
\end{subequations}
The above parameter matrix $\(\mathcal{L}_{i,j}\)_{i,j=1}^M$ still ensures the symmetry and  semi-positive definite property. 
 We introduce  an intermediate  velocity as
 \begin{eqnarray}\label{eqDecoupledDiscreteVelocityStar}
\u_\star^{k} = \u^k-\frac{\delta t_{k}}{\rho^k} \(\sum_{i=1}^Mn_i^{k}\grad \mu_i^{k+1}+ s^{k}\grad T^{k+1}-\rho^k\g\).
\end{eqnarray}
Based on the formulations of  $\u_\star^{k}$ and $\J_i^{k+1}$, we propose the following  semi-implicit  scheme  for the mass balance equation of component $i$
\begin{equation}\label{eqDiscreteMassEq}
\frac{ n_i^{k+1}-n_i^k}{\delta t_k}+\div\(n_i^{k}\u_\star^{k}\)+\div \J_i^{k+1}=0.
\end{equation}
The momentum balance equation is discretized as 
\begin{equation}\label{eqdiscreteMomentumConserveEq}
 \rho^k\(\frac{  \u^{k+1}-\u_\star^k}{\delta t_k}+ \u_\star^k\cdot\grad{ \u^{k+1}}\)
 +\sum_{i=1}^MM_{w,i}\J_i^{k+1}\cdot\grad\u^{k+1}=\div\bm{\tau}^{k+1},
\end{equation}
where 
\begin{equation}\label{eqDiscreteTotalStressB}
\bm\tau^{k+1}= \(\lambda^k\div\u^{k+1}\)\I+\eta^k\bm\varepsilon(\u^{k+1}).
\end{equation}

The total energy at the $k$th time step is expressed as
\begin{equation}\label{eqDiscreteTotalEnergy}
e_t^{k}=f(\n^k,T^k)+T^ks^k+\frac{1}{2}\rho^k|\u^k|^2+\rho^kgh.
\end{equation}
We also introduce the intermediate total energy as
\begin{equation}\label{eqDiscreteIntermediateTotalEnergy}
e_{t\star}^{k}=f(\n^{k+1},T^{k+1})+T^{k+1}s^k+\frac{1}{2}\rho^k|\u^{k+1}|^2+\rho^kgh.
\end{equation}
The pressure at the $(k+1)$th time step is calculated as
\begin{equation}\label{eqDiscreteGeneralPres}
p^{k+1}=\sum_{i=1}^Mn_i^k\mu_i^{k+1}-f(\n^{k+1},T^{k+1}).
\end{equation}
The total energy conservation equation has the following discrete form
\begin{equation}\label{eqDiscreteTotalEnergyConservation}
 \frac{ e_t^{k+1}-e_t^{k}}{\delta t_k} + \div\(\u_\star^{k} (e_{t\star}^{k}+p^{k+1})-\bm\tau^{k+1}\cdot\u^{k+1}\)=-\div\(\q^{k+1}-\bm\pi^{k+1}\) ,
\end{equation}
where \begin{equation}\label{eqDiscreteGenHeatFlux}
\bm\pi^{k+1} =  \sum_{i,j=1}^Mc_{ij}^{k+1}\frac{ n_i^{k+1}-n_i^k}{\delta t_k}\grad{n_j^{k+1}}-\frac{1}{2}\sum_{i=1}^MM_{w,i}|\u^{k+1}|^2\J_i^{k+1}  .
\end{equation}

\subsection{Unconditional entropy stability}

 We will derive an equality of entropy to prove that the proposed numerical scheme is unconditionally entropy stable. 
 
The entropy  equality will be deduced from the discrete total energy conservation equation through reducing  the  kinetic  energies and Helmholtz free energies, and for this purpose, we need first to prove the following lemmas on the variations of the  kinetic  energies and Helmholtz free energies.
\begin{lem}\label{lemKineticEnergiesVariation}
The variation of kinetic  energies between  the $k$th and $(k+1)$th  time steps is estimated as
 \begin{eqnarray}\label{eqKineticEnergiesVariation}
\frac{\rho^{k+1}|\u^{k+1}|^2-\rho^k|\u^{k}|^2}{2\delta t_{k}}&\leq&
-\frac{1}{2}\div\(\u_\star^{k}\rho^{k}|\u^{k+1}|^2\)-\frac{1}{2}\sum_{i=1}^MM_{w,i}\div\(|\u^{k+1}|^2\J_i^{k+1}\)\nonumber\\
&&-\u_\star^{k}\cdot\(n^{k}\grad \mu^{k+1}+ s^{k}\grad T^{k+1}-\rho^k\g\)\nonumber\\
&&+\div\(\bm{\tau}^{k+1}\cdot\u^{k+1}\)-\bm{\tau}^{k+1}:\grad\u^{k+1}.
\end{eqnarray}
\end{lem}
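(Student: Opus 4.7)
The plan is to decompose the discrete kinetic energy increment into pieces that can be handled separately by the three discrete equations at our disposal: the total mass balance obtained from \eqref{eqDiscreteMassEq} by multiplying by $M_{w,i}$ and summing, the definition \eqref{eqDecoupledDiscreteVelocityStar} of the intermediate velocity $\u_\star^k$, and the discrete momentum balance \eqref{eqdiscreteMomentumConserveEq}. I first write
\begin{equation*}
\rho^{k+1}|\u^{k+1}|^2-\rho^k|\u^k|^2 = (\rho^{k+1}-\rho^k)|\u^{k+1}|^2 + \rho^k\bigl(|\u^{k+1}|^2-|\u_\star^k|^2\bigr) + \rho^k\bigl(|\u_\star^k|^2-|\u^k|^2\bigr),
\end{equation*}
and then apply the polarization identity $|a|^2-|b|^2 = 2a\cdot(a-b)-|a-b|^2$ to the last two groups. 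This is the step that creates the two non-positive residual terms $-\rho^k|\u^{k+1}-\u_\star^k|^2/(2\delta t_k)$ and $-\rho^k|\u_\star^k-\u^k|^2/(2\delta t_k)$, which are precisely what turns the identity into the stated inequality.

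Next I would substitute each discrete equation into the corresponding piece. For $\rho^k(|\u_\star^k|^2-|\u^k|^2)/(2\delta t_k)$, taking the inner product of $\u_\star^k$ with \eqref{eqDecoupledDiscreteVelocityStar} rewritten as $\rho^k(\u_\star^k-\u^k)/\delta t_k = -(\sum_i n_i^k\grad\mu_i^{k+1}+s^k\grad T^{k+1}-\rho^k\g)$ yields exactly the pressure/temperature/gravity driving term $-\u_\star^k\cdot(\sum_i n_i^k\grad\mu_i^{k+1}+s^k\grad T^{k+1}-\rho^k\g)$. For $\rho^k(|\u^{k+1}|^2-|\u_\star^k|^2)/(2\delta t_k)$, taking the inner product of $\u^{k+1}$ with \eqref{eqdiscreteMomentumConserveEq} and using the chain-rule identities $\u^{k+1}\cdot(\u_\star^k\cdot\grad\u^{k+1})=\tfrac12\u_\star^k\cdot\grad|\u^{k+1}|^2$ and $\u^{k+1}\cdot(\J_i^{k+1}\cdot\grad\u^{k+1})=\tfrac12\J_i^{k+1}\cdot\grad|\u^{k+1}|^2$, together with the integration-by-parts identity $\u^{k+1}\cdot\div\bm\tau^{k+1}=\div(\bm\tau^{k+1}\cdot\u^{k+1})-\bm\tau^{k+1}{:}\grad\u^{k+1}$, produces the viscous dissipation pair and the $\tfrac12\rho^k\u_\star^k\cdot\grad|\u^{k+1}|^2$ and $\tfrac12 M_{w,i}\J_i^{k+1}\cdot\grad|\u^{k+1}|^2$ contributions.

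The final step is to recombine the convection-type contributions with the $(\rho^{k+1}-\rho^k)|\u^{k+1}|^2/(2\delta t_k)$ term, for which I would use the discrete total mass balance in the form $(\rho^{k+1}-\rho^k)/\delta t_k = -\div(\rho^k\u_\star^k)-\sum_i M_{w,i}\div\J_i^{k+1}$. The product-rule identities
\begin{equation*}
\div(\u_\star^k\rho^k|\u^{k+1}|^2) = |\u^{k+1}|^2\div(\rho^k\u_\star^k) + \rho^k\u_\star^k\cdot\grad|\u^{k+1}|^2,
\end{equation*}
\begin{equation*}
\div(|\u^{k+1}|^2\J_i^{k+1}) = |\u^{k+1}|^2\div\J_i^{k+1} + \J_i^{k+1}\cdot\grad|\u^{k+1}|^2,
\end{equation*}
then collapse the scalar and gradient terms into the two conservative fluxes appearing on the right-hand side of \eqref{eqKineticEnergiesVariation}. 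Dropping the two non-positive residuals turns the resulting equality into the claimed inequality.

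The main obstacle will be the bookkeeping in the third step: the scheme has been designed so that the advection velocity inside the momentum update is precisely $\u_\star^k$, which is the same velocity carrying $n_i^k$ in the mass balance, and this delicate matching is what allows the $(\rho^{k+1}-\rho^k)$ term and the $\u_\star^k\cdot\grad|\u^{k+1}|^2$ term to merge cleanly into $\div(\u_\star^k\rho^k|\u^{k+1}|^2)$ (and similarly for the $\J_i^{k+1}$ contributions). If the discretization of either the mass convection or the convective momentum term were altered, these cancellations would fail, so the argument is really a verification that the scheme is internally consistent with the discrete kinetic-energy identity; once the decomposition and polarization identity are in place, the remainder of the proof is essentially algebraic.
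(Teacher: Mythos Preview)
Your proposal is correct and follows essentially the same approach as the paper's proof: the paper also splits the kinetic-energy increment through the intermediate velocity $\u_\star^k$, applies the identity $|a|^2-|b|^2=2a\cdot(a-b)-|a-b|^2$ to each piece, substitutes the discrete overall mass balance, the discrete momentum equation, and the definition of $\u_\star^k$, and then drops the two non-positive residuals $-\tfrac{1}{2\delta t_k}\rho^k|\u^{k+1}-\u_\star^k|^2$ and $-\tfrac{1}{2\delta t_k}\rho^k|\u_\star^k-\u^k|^2$ to obtain the inequality. The only cosmetic difference is ordering: the paper first groups $(\rho^{k+1}-\rho^k)|\u^{k+1}|^2$ with $\rho^k(|\u^{k+1}|^2-|\u_\star^k|^2)$ and collapses those into the divergence fluxes before treating $\rho^k(|\u_\star^k|^2-|\u^k|^2)$ separately, whereas you defer the merging of the convective terms to the end; the algebra is identical.
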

\begin{proof}
For the kinetic  energy at the $(k+1)$th time step and the intermediate kinetic energy, we derive their difference 
 \begin{align}\label{eqKineticEnergiesVariationProof01}
\frac{\rho^{k+1}|\u^{k+1}|^2-\rho^k|\u_\star^{k}|^2}{2}
&=\frac{1}{2}\rho^{k}\(|\u^{k+1}|^2-|\u_\star^{k}|^2\)+\frac{1}{2}\(\rho^{k+1}-\rho^{k}\)|\u^{k+1}|^2\nonumber\\
   &= \rho^{k}\(\u^{k+1}-\u_\star^{k}\)\cdot\u^{k+1}-\frac{1}{2}\rho^{k}|\u^{k+1}-\u_\star^{k}|^2\nonumber\\
   &~~~+\frac{1}{2}\(\rho^{k+1}-\rho^{k}\)|\u^{k+1}|^2.
\end{align}
The overall mass balance equation is deduced from the component mass balance equations \eqref{eqDiscreteMassEq} as
\begin{eqnarray}\label{eqDiscreteOverallMassBalanceEquation}
\frac{ \rho^{k+1}-\rho^{k}}{\delta t_{k}}+\div(\rho^{k}\u_\star^{k})+\sum_{i=1}^MM_{w,i}\div\J_i^{k+1}=0.
\end{eqnarray}
Substituting \eqref{eqdiscreteMomentumConserveEq} and \eqref{eqDiscreteOverallMassBalanceEquation} into \eqref{eqKineticEnergiesVariationProof01} yields
\begin{eqnarray}\label{eqKineticEnergiesVariationProof02}
\frac{\rho^{k+1}|\u^{k+1}|^2-\rho^k|\u_\star^{k}|^2}{2\delta t_{k}}&=&-\(\rho^{k}\u_\star^{k}\cdot\grad\u^{k+1}+\sum_{i=1}^MM_{w,i}\J_i^{k+1}\cdot\grad\u^{k+1}\)\cdot\u^{k+1}\nonumber\\
&&+\u^{k+1}\cdot\div\bm{\tau}^{k+1}\nonumber\\
&&-\frac{1}{2}|\u^{k+1}|^2\(\div(\rho^{k}\u_\star^{k})+\sum_{i=1}^MM_{w,i}\div\J_i^{k+1}\)\nonumber\\
&&-\frac{1}{2\delta t_{k}}\rho^{k}|\u^{k+1}-\u_\star^{k}|^2\nonumber\\
&=&-\frac{1}{2}\div\(\u_\star^{k}\rho^{k}|\u^{k+1}|^2\)-\frac{1}{2}\sum_{i=1}^MM_{w,i}\div\(|\u^{k+1}|^2\J_i^{k+1}\)\nonumber\\
&&+\div\(\bm{\tau}^{k+1}\cdot\u^{k+1}\)-\bm{\tau}^{k+1}:\grad\u^{k+1}\nonumber\\
&&-\frac{1}{2\delta t_{k}}\rho^{k}|\u^{k+1}-\u_\star^{k}|^2.
\end{eqnarray}
Using the definition of  the intermediate kinetic energy, we deduce that 
\begin{eqnarray}\label{eqKineticEnergiesVariationProof03}
\frac{\rho^k|\u_\star^{k}|^2-\rho^k|\u^{k}|^2}{2}&=&\rho^k\(\u_\star^{k}-\u^k\)\cdot \u_\star^{k}-\frac{1}{2}\rho^{k}|\u_\star^{k}-\u^{k}|^2\nonumber\\
&=&-\delta t_{k} \u_\star^{k}\cdot\(n^{k}\grad \mu^{k+1}+ s^{k}\grad T^{k+1}-\rho^k\g\)\nonumber\\
&&-\frac{1}{2}\rho^{k}|\u_\star^{k}-\u^{k}|^2.
\end{eqnarray}
We combine \eqref{eqKineticEnergiesVariationProof02} and \eqref{eqKineticEnergiesVariationProof03} to get
\begin{eqnarray}\label{eqKineticEnergiesVariationProof04}
\frac{\rho^{k+1}|\u^{k+1}|^2-\rho^k|\u^{k}|^2}{2\delta t_{k}}&=&
-\frac{1}{2}\div\(\u_\star^{k}\rho^{k}|\u^{k+1}|^2\)-\frac{1}{2}\sum_{i=1}^MM_{w,i}\div\(|\u^{k+1}|^2\J_i^{k+1}\)\nonumber\\
&&-\u_\star^{k}\cdot\(n^{k}\grad \mu^{k+1}+ s^{k}\grad T^{k+1}-\rho^k\g\)\nonumber\\
&&+\div\(\bm{\tau}^{k+1}\cdot\u^{k+1}\)-\bm{\tau}^{k+1}:\grad\u^{k+1}\nonumber\\
&&-\frac{1}{2\delta t_{k}}\rho^{k}|\u^{k+1}-\u_\star^{k}|^2-\frac{1}{2\delta t_{k}}\rho^{k}|\u_\star^{k}-\u^{k}|^2,
\end{eqnarray}
which yields \eqref{eqKineticEnergiesVariation}.
  \end{proof}

\begin{lem}\label{lemFreeEnergyVariation}
The variation of Helmholtz free energies between  the $k$th and $(k+1)$th  time steps satisfy  the following inequality
 \begin{eqnarray}\label{eqFreeEnergyVariation}
\frac{ f^{k+1}-f^k}{\delta t_k}
 &\leq&-\div\(\u_\star^kf^{k+1}\)-\div\(\u^k_\star p^{k+1}\)-s^k\frac{ T^{k+1}-T^k}{\delta t_k}\nonumber\\
&&
 +\frac{1}{\delta t_k}\sum_{i,j=1}^M\div \(c_{ij}^{k+1}  \(n_i^{k+1}-n_i^k\)\grad n_j^{k+1}\)\nonumber\\
&&-\sum_{i=1}^M\mu_{i}^{k+1}\div\J_i^{k+1}+\sum_{i=1}^M\u^k_\star \cdot n_i^k\grad\mu_{i}^{k+1},
 \end{eqnarray}
 where we denote $f^k=f(\n^k,T^k)$ and $s^k=s(\n^{k},T^k)$.
\end{lem}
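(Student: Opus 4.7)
The idea is to mimic the continuous transport identity \eqref{eqFinalMultiCompononentHelmholtzMultiTransport} at the discrete level, but with inequality instead of equality, the slack arising precisely from the convex--concave splitting \eqref{eqConvexConcaveHelmholtzEnergy} together with the concavity of $f$ in $T$ (Lemma \ref{lemConvex} and the standing assumption $(\partial^2 f/\partial T^2)_\n\le 0$). First I would telescope
\[
 f^{k+1}-f^k=\bigl[f(\n^{k+1},T^{k+1})-f(\n^{k},T^{k+1})\bigr]+\bigl[f(\n^{k},T^{k+1})-f(\n^{k},T^{k})\bigr],
\]
so that the two brackets can be controlled by $\n$-tangent inequalities at fixed $T=T^{k+1}$ and a $T$-tangent inequality at fixed $\n=\n^{k}$, respectively.

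For the first bracket I split $f_b=f_b^{\textnormal{convex}}+f_b^{\textnormal{concave}}$ and apply the tangent-line inequality for the convex piece at the base point $\n^{k+1}$ and for the concave piece at the base point $\n^{k}$; their sum, in view of \eqref{eqDiscretegeneralchemicalpotentialForm02}, telescopes the partial derivatives into $\sum_i\mu_i^{b,k+1}(n_i^{k+1}-n_i^k)$. Since $f_\grad$ is a nonnegative quadratic form in the gradients $\nabla n_i$, the same tangent argument in the gradient variable evaluated at $\nabla n_i^{k+1}$ gives $f_\grad(\n^{k+1},T^{k+1})-f_\grad(\n^{k},T^{k+1})\le\sum_{i,j}c_{ij}^{k+1}\nabla n_j^{k+1}\cdot\nabla(n_i^{k+1}-n_i^k)$. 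For the second bracket, concavity of $f$ in $T$ together with $\partial_T f=-s$ yields $f(\n^k,T^{k+1})-f(\n^k,T^k)\le -s^k(T^{k+1}-T^k)$. Next I rewrite the gradient pairing via the pointwise product-rule identity $c_{ij}^{k+1}\nabla n_j^{k+1}\cdot\nabla(n_i^{k+1}-n_i^k)=\div\bigl((n_i^{k+1}-n_i^k)c_{ij}^{k+1}\nabla n_j^{k+1}\bigr)-(n_i^{k+1}-n_i^k)\div(c_{ij}^{k+1}\nabla n_j^{k+1})$; summing over $j$ and invoking \eqref{eqDiscretegeneralchemicalpotentialForm03} converts the last piece into $(n_i^{k+1}-n_i^k)\mu_{\grad,i}^{k+1}$, which merges with the $\mu_i^{b,k+1}$ term to produce the full chemical potential $\mu_i^{k+1}$.

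At this stage one has $(f^{k+1}-f^k)/\delta t_k\le\sum_i\mu_i^{k+1}(n_i^{k+1}-n_i^k)/\delta t_k+(1/\delta t_k)\sum_{i,j}\div\bigl((n_i^{k+1}-n_i^k)c_{ij}^{k+1}\nabla n_j^{k+1}\bigr)-s^k(T^{k+1}-T^k)/\delta t_k$. Substituting the discrete mass balance \eqref{eqDiscreteMassEq} for $(n_i^{k+1}-n_i^k)/\delta t_k$ produces the $-\mu_i^{k+1}\div\J_i^{k+1}$ term directly, while the convective piece $-\sum_i\mu_i^{k+1}\div(n_i^k\u_\star^k)$ is handled by the product rule $\mu_i^{k+1}\div(n_i^k\u_\star^k)=\div(\mu_i^{k+1}n_i^k\u_\star^k)-n_i^k\u_\star^k\cdot\nabla\mu_i^{k+1}$; summing on $i$ and collapsing $\sum_i n_i^k\mu_i^{k+1}=p^{k+1}+f^{k+1}$ via the discrete pressure identity \eqref{eqDiscreteGeneralPres} converts the divergence into exactly $-\div(\u_\star^k f^{k+1})-\div(\u_\star^k p^{k+1})$, matching the remaining terms in \eqref{eqFreeEnergyVariation}. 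The main obstacle is purely structural: the base points in the convex/concave tangent inequalities must be chosen as $\n^{k+1}$ for the convex piece, $\n^k$ for the concave piece, and $T^k$ for the $T$-concavity, because only these choices produce the semi-implicit chemical potential $\mu_i^{b,k+1}$ defined in \eqref{eqDiscretegeneralchemicalpotentialForm02} and the explicit entropy $s^k$ that appear on the right-hand side; once these are fixed, the remaining algebra is forced by the definition of the discrete pressure and by the discrete mass balance.
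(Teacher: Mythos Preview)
Your proposal is correct and follows essentially the same route as the paper: the same telescope $f^{k+1}-f^k=[f(\n^{k+1},T^{k+1})-f(\n^{k},T^{k+1})]+[f(\n^{k},T^{k+1})-f(\n^{k},T^{k})]$, the same convex/concave tangent inequalities for $f_b$ and the quadratic gradient form, the same $T$-concavity for the entropy term, the same product-rule/integration-by-parts to produce $\mu_{\grad,i}^{k+1}$, and the same substitution of the discrete mass balance and discrete pressure identity. The only difference is organizational: the paper treats $f_b$ and $f_\grad$ in separate chains (introducing auxiliary pressures $p_b^{k+1}$, $p_\grad^{k+1}$ and entropies $\gamma_b^k$, $\gamma_\grad^k$) and merges them at the end, whereas you merge earlier and invoke \eqref{eqDiscreteGeneralPres} directly; your version is slightly more streamlined but equivalent.
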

\begin{proof}
For the sake of notation simplification, we denote $f_b^k=f_b(\n^{k},T^k)$ and $f_\grad^k=f_\grad(\n^{k},T^k)$. Furthermore, we separate the pressure $p^{k+1}$ as $p^{k+1}=p_b^{k+1}+p_\grad^{k+1}$, where
\begin{equation*}
p_b^{k+1}=\sum_{i=1}^Mn_i^k\mu_i^{b,k+1}-f_b^{k+1},~~~
p_\grad^{k+1}=\sum_{i=1}^Mn_i^k\mu_{\grad,i}^{k+1}-f_\grad^{k+1}.
\end{equation*}
 Let  us denote $\gamma_b^k=\gamma_b(\n^k,T^k)$ and $\gamma_\grad^k=\gamma_\grad(\n^k,T^k)$.  Applying  the convex-concave properties and   component mass balance equations, we  deduce that
\begin{eqnarray}\label{eqFreeEnergyVariationProof01}
 \frac{ f_b^{k+1}-f_b^k}{\delta t_k}
 &=&\frac{ f_b(\n^{k+1},T^{k+1})-f_b(\n^{k},T^{k+1})}{\delta t_k}+\frac{ f_b(\n^{k},T^{k+1})-f_b(\n^{k},T^k)}{\delta t_k}
 \nonumber\\
 &\leq&\sum_{i=1}^M\mu_i^{b,k+1}\frac{ n_i^{k+1}-n_i^k}{\delta t_k}+\gamma_b^k\frac{ T^{k+1}-T^k}{\delta t_k}
 \nonumber\\
 &\leq&-\sum_{i=1}^M\mu_i^{b,k+1}\(\div(n_i^k\u^k_\star)+\div\J_i^{k+1}\)+\gamma_b^k\frac{ T^{k+1}-T^k}{\delta t_k}\nonumber\\
 &\leq&-\sum_{i=1}^M\(\div(\u_\star^kn_i^k\mu_i^{b,k+1})-\u^k_\star\cdot n_i^k\grad\mu_i^{b,k+1}\)\nonumber\\
 &&-\sum_{i=1}^M\mu_i^{b,k+1}\div\J_i^{k+1}+\gamma_b^k\frac{ T^{k+1}-T^k}{\delta t_k}\nonumber\\
 &\leq&-\div(f_b^{k+1}\u_\star^k)-\div(\u_\star^kp_b^{k+1})-\sum_{i=1}^M\u^k_\star\cdot n_i^k\grad\mu_i^{b,k+1}\nonumber\\
 &&-\sum_{i=1}^M\mu_i^{b,k+1}\div\J_i^{k+1}+\gamma_b^k\frac{ T^{k+1}-T^k}{\delta t_k}.
\end{eqnarray}
The variation of    gradient term of Helmholtz free energy density can be derived as
\begin{eqnarray}\label{eqFreeEnergyVariationProof02}
\frac{ f_\grad^{k+1}-f_\grad^k}{\delta t_k}
 &=&\frac{1}{2\delta t_k}\sum_{i,j=1}^M\(c_{ij}^{k+1} \nabla n_i^{k+1}\cdot\nabla n_j^{k+1}-c_{ij}^{k} \nabla n_i^{k}\cdot\nabla n_j^{k}\)\nonumber\\
 &=&\frac{1}{2\delta t_k}\sum_{i,j=1}^M\(c_{ij}^{k+1}-c_{ij}^{k}\) \nabla n_i^{k}\cdot\nabla n_j^{k}\nonumber\\
 &&+\frac{1}{2\delta t_k}\sum_{i,j=1}^Mc_{ij}^{k+1} \(\nabla n_i^{k+1}\cdot\nabla n_j^{k+1}-\nabla n_i^{k}\cdot\nabla n_j^{k}\)\nonumber\\
 &\leq&\gamma_\grad^k\frac{ T^{k+1}-T^k}{\delta t_k}
 +\frac{1}{\delta t_k}\sum_{i,j=1}^Mc_{ij}^{k+1} \grad \(n_i^{k+1}-n_i^k\)\cdot\grad n_j^{k+1}\nonumber\\
 &\leq&\gamma_\grad^k\frac{ T^{k+1}-T^k}{\delta t_k}
 +\frac{1}{\delta t_k}\sum_{i,j=1}^M\div c_{ij}^{k+1} \(n_i^{k+1}-n_i^k\)\grad n_j^{k+1}\nonumber\\
&&+\sum_{i=1}^M\frac{n_i^{k+1}-n_i^k}{\delta t_k}\mu_{\grad,i}^{k+1}\nonumber\\
 &\leq&\gamma_\grad^k\frac{ T^{k+1}-T^k}{\delta t_k}
 +\frac{1}{\delta t_k}\sum_{i,j=1}^M\div c_{ij}^{k+1}  \(n_i^{k+1}-n_i^k\)\grad n_j^{k+1}\nonumber\\
&&-\sum_{i=1}^M\(\div(n_i^k\u^k_\star)+\div\J_i^{k+1}\)\mu_{\grad,i}^{k+1}\nonumber\\
 &\leq&\gamma_\grad^k\frac{ T^{k+1}-T^k}{\delta t_k}
 +\frac{1}{\delta t_k}\sum_{i,j=1}^M\div c_{ij}^{k+1} \(n_i^{k+1}-n_i^k\)\grad n_j^{k+1}\nonumber\\
&&-\sum_{i=1}^M\mu_{\grad,i}^{k+1}\div\J_i^{k+1}-\sum_{i=1}^M\div\(\u^k_\star n_i^k\mu_{\grad,i}^{k+1}\)+\sum_{i=1}^M\u^k_\star \cdot n_i^k\grad\mu_{\grad,i}^{k+1}\nonumber\\
 &\leq&\gamma_\grad^k\frac{ T^{k+1}-T^k}{\delta t_k}
 +\frac{1}{\delta t_k}\sum_{i,j=1}^M\div c_{ij}^{k+1} \(n_i^{k+1}-n_i^k\)\grad n_j^{k+1}\nonumber\\
&&-\sum_{i=1}^M\mu_{\grad,i}^{k+1}\div\J_i^{k+1}-\div\(\u^k_\star p_{\grad}^{k+1}\)\nonumber\\
&&+\sum_{i=1}^M\u^k_\star \cdot n_i^k\grad\mu_{\grad,i}^{k+1}
-\div\(\u_\star^kf_\grad(\n^{k+1},T^k)\).
 \end{eqnarray}
Combining \eqref{eqFreeEnergyVariationProof01} and \eqref{eqFreeEnergyVariationProof02} and taking into account $s=-\gamma$, we obtain the inequality \eqref{eqFreeEnergyVariation}.
\end{proof}

We now prove that the proposed semi-implicit scheme  obeys the   laws of thermodynamics; that is, it is unconditionally entropy stable. 
\begin{thm}\label{thmDiscreteEntropyStability}
The proposed semi-implicit scheme   satisfies  the second law of thermodynamics in the sense of 
\begin{eqnarray}\label{eqDiscreteEntropyStability01}
  \frac{s^{k+1}-s^k}{\delta t_k}&\geq&-\div\frac{\q^{k+1}}{T^{k+1}}-\div(\u_\star^ks^k)+\sum_{i=1}^M\div\(\J_i^{k+1}\frac{\mu_{i}^{k+1}+M_{w,i}gh}{T^{k+1}}\)\nonumber\\
  &&+\q^{k+1}\cdot\grad\frac{1}{T^{k+1}}-\sum_{i=1}^M\J_i^{k+1}\cdot\grad\frac{\mu_{i}^{k+1}+M_{w,i}gh}{T^{k+1}}\nonumber\\
  &&+\frac{1}{T^{k+1}}\( \lambda^k|\div\u^{k+1}|^2+\frac{1}{2}\eta^k|\bm\varepsilon(\u^{k+1})|^2\),
 \end{eqnarray}
 where  $s^k=s(\n^k,T^k)$.
 Moreover, for an isolated system, we have
\begin{eqnarray}\label{eqDiscreteEntropyStability02}
   \frac{ \mathcal{S}^{k+1}-\mathcal{S}^{k}}{\delta t_{k}}&\geq&\(\q^{k+1},\grad\frac{1}{T^{k+1}}\)-\sum_{i=1}^M\(\J_i^{k+1},\grad\frac{\mu_{i}^{k+1}+M_{w,i}gh}{T^{k+1}}\)\nonumber\\
   &&+\( \lambda^k|\div\u^{k+1}|^2+\frac{1}{2}\eta^k|\bm\varepsilon(\u^{k+1})|^2,\frac{1}{T^{k+1}}\)\geq0,
 \end{eqnarray}
  where $\mathcal{S}^k=\int_\Omega s^kd\x$.
\end{thm}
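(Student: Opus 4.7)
The plan is to derive the discrete entropy inequality directly from the discrete total energy balance \eqref{eqDiscreteTotalEnergyConservation} by substituting the bounds of Lemma \ref{lemKineticEnergiesVariation} and Lemma \ref{lemFreeEnergyVariation}, mimicking at the discrete level the continuous derivation leading to \eqref{eqEntropyeqfinal}. To begin, I would expand the total-energy difference $e_t^{k+1}-e_t^k$ using the definitions \eqref{eqDiscreteTotalEnergy}--\eqref{eqDiscreteIntermediateTotalEnergy} and the thermodynamic identity $\vartheta=f+Ts$, splitting it into a free-energy part, a kinetic-energy part, an enthalpy-like part $T^{k+1}s^{k+1}-T^ks^k$, and a gravity part $gh(\rho^{k+1}-\rho^k)$. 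The last piece can be rewritten through the discrete overall mass balance \eqref{eqDiscreteOverallMassBalanceEquation} as divergence terms plus the contributions $\rho^k\u_\star^k\cdot\g$ and $\sum_i M_{w,i}\J_i^{k+1}\cdot\g$ (using $\g=-g\grad h$).

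Next I would substitute the estimate \eqref{eqKineticEnergiesVariation} for the kinetic-energy variation and \eqref{eqFreeEnergyVariation} for the free-energy variation. The key cancellations are by design: the term $-\u_\star^k\cdot\bigl(\sum_i n_i^k\grad\mu_i^{k+1}+s^k\grad T^{k+1}-\rho^k\g\bigr)$ in the kinetic lemma cancels exactly against $\sum_i \u_\star^k\cdot n_i^k\grad\mu_i^{k+1}$ coming from Lemma \ref{lemFreeEnergyVariation} together with the temperature- and gravity-driven pieces; this is precisely why $\u_\star^k$ was introduced via \eqref{eqDecoupledDiscreteVelocityStar}. The pressure convection term $\div(\u_\star^k p^{k+1})$ from Lemma \ref{lemFreeEnergyVariation} combines with the $\div(\u_\star^k(e_{t\star}^k+p^{k+1}))$ flux in \eqref{eqDiscreteTotalEnergyConservation} to yield $\div(\u_\star^k e_{t\star}^k)$, matching the expansion of $e_t^{k+1}-e_t^k$ in conservative form; the $\div(\bm\tau^{k+1}\cdot\u^{k+1})$ term appearing in both the energy equation and the kinetic lemma cancels as well, leaving only $-\bm\tau^{k+1}:\grad\u^{k+1}$. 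The $\bm\pi^{k+1}$ flux \eqref{eqDiscreteGenHeatFlux} cancels precisely the $\sum_{i,j}\div\bigl(c_{ij}^{k+1}(n_i^{k+1}-n_i^k)\grad n_j^{k+1}/\delta t_k\bigr)$ piece from Lemma \ref{lemFreeEnergyVariation} and the $\frac{1}{2}\sum_i M_{w,i}\div(|\u^{k+1}|^2\J_i^{k+1})$ piece from Lemma \ref{lemKineticEnergiesVariation}.

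After these cancellations only an inequality for $T^{k+1}s^{k+1}-T^ks^k$ remains, with right-hand side $-\div\q^{k+1}-\sum_i\mu_i^{k+1}\div\J_i^{k+1}+\bm\tau^{k+1}:\grad\u^{k+1}-s^k(T^{k+1}-T^k)$ plus conservative fluxes. I would then split $T^{k+1}s^{k+1}-T^ks^k=T^{k+1}(s^{k+1}-s^k)+s^k(T^{k+1}-T^k)$, so that the $s^k(T^{k+1}-T^k)/\delta t_k$ contribution drops. Dividing by $T^{k+1}$ and rewriting $-\div\q^{k+1}/T^{k+1}=-\div(\q^{k+1}/T^{k+1})+\q^{k+1}\cdot\grad(1/T^{k+1})$, and similarly rewriting $-\mu_i^{k+1}\div\J_i^{k+1}/T^{k+1}$ and the gravitational work $\sum_iM_{w,i}gh\,\J_i^{k+1}\cdot\g/T^{k+1}$ that still carries over, produces exactly the divergence structure and flux-gradient pairings displayed in \eqref{eqDiscreteEntropyStability01}. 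Finally, using $\bm\tau^{k+1}:\grad\u^{k+1}=\lambda^k|\div\u^{k+1}|^2+\tfrac12\eta^k|\bm\varepsilon(\u^{k+1})|^2$ gives the last term.

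For the integrated form \eqref{eqDiscreteEntropyStability02} I would integrate over $\Omega$; all conservative divergences vanish under the assumed boundary conditions. The viscous contribution is manifestly non-negative since $\eta^k,\lambda^k>0$. The remaining entropy-production pair $\bigl(\q^{k+1},\grad\tfrac{1}{T^{k+1}}\bigr)-\sum_i\bigl(\J_i^{k+1},\grad\tfrac{\mu_i^{k+1}+M_{w,i}gh}{T^{k+1}}\bigr)$ is a quadratic form in the thermodynamic driving forces with the symmetric, positive semi-definite mobility matrix $\bm{\mathcal{L}}$ from \eqref{eqDiscreteDiffusionHeatFluxes}, hence non-negative. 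The main obstacle I anticipate is bookkeeping: tracking every divergence, pressure, chemical-potential, and temperature-gradient term through the substitutions so that exactly the right conservative fluxes assemble and the non-negative entropy-production terms emerge cleanly; the role of $\u_\star^k$ and of the auxiliary energy flux $\bm\pi^{k+1}$ is the crucial structural ingredient that makes these cancellations occur unconditionally in $\delta t_k$.
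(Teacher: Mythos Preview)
Your proposal is correct and follows essentially the same route as the paper's own proof: expand $e_t^{k+1}-e_t^k$ into free-energy, kinetic, $Ts$, and gravity pieces; substitute Lemmas \ref{lemKineticEnergiesVariation} and \ref{lemFreeEnergyVariation} together with the gravity identity obtained from \eqref{eqDiscreteOverallMassBalanceEquation}; use the algebraic split $T^{k+1}s^{k+1}-T^ks^k=T^{k+1}(s^{k+1}-s^k)+s^k(T^{k+1}-T^k)$; divide by $T^{k+1}$ and reorganize the $\q^{k+1}$ and $\J_i^{k+1}$ terms into divergence plus entropy-production form; then integrate. The only discrepancies are bookkeeping slips you already anticipate: the sign on $\sum_i\mu_i^{k+1}\div\J_i^{k+1}$ in your intermediate inequality should be $+$ (it arises from subtracting the upper bound in Lemma \ref{lemFreeEnergyVariation}), and the expression ``$M_{w,i}gh\,\J_i^{k+1}\cdot\g$'' is garbled --- the paper simply keeps the gravity contribution as $-\sum_i M_{w,i}gh\,\div\J_i^{k+1}$ and combines it directly with $\mu_i^{k+1}\div\J_i^{k+1}$ to form $(\mu_i^{k+1}+M_{w,i}gh)\div\J_i^{k+1}$, which is cleaner than integrating by parts twice.
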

\begin{proof}
 We note that $e_t^k=f^k+T^ks^k+\frac{1}{2}\rho^k|\u^k|^2+\rho^kgh$. We have already deduced the variations of the kinetic energy and Helmholtz free energy with time steps, and now we deduce the variation of gravity potential energy with time steps
\begin{eqnarray}\label{eqVariationgravitypotentialenergy}
  gh\frac{\rho^{k+1}-\rho^k}{\delta t_k}&=&-gh\(\div(\rho^{k}\u_\star^{k})+\sum_{i=1}^MM_{w,i}\div\J_i^{k+1}\)\nonumber\\
  &=&-\div(\rho^{k}gh\u_\star^{k})-\rho^{k}\u_\star^{k}\cdot\g-\sum_{i=1}^MM_{w,i}gh\div\J_i^{k+1}.
 \end{eqnarray}
Using \eqref{eqKineticEnergiesVariation}, \eqref{eqFreeEnergyVariation} and \eqref{eqVariationgravitypotentialenergy}, and taking into account 
 \begin{equation}\label{eqDiscreteEntropyStabilityProof01}
 T^{k+1}s^{k+1}-T^ks^k=T^{k+1}\(s^{k+1}-s^k\)+s^k(T^{k+1}-T^k),
 \end{equation}
  we reduce the  kinetic  energies,  Helmholtz free energies and gravity potential energies from the total energy equation to derive  that
 \begin{eqnarray}\label{eqDiscreteEntropyStabilityProof02}
  T^{k+1}\frac{s^{k+1}-s^k}{\delta t_k}&\geq&-\div(\q^{k+1}+\u_\star^kT^{k+1}s^k)+\sum_{i=1}^M\(\mu_{i}^{k+1}+M_{w,i}gh\)\div\J_i^{k+1}\nonumber\\
  &&+\u_\star^{k}\cdot s^{k}\grad T^{k+1}+\bm{\tau}^{k+1}:\grad\u^{k+1}.
 \end{eqnarray}
 Furthermore, we reduce \eqref{eqDiscreteEntropyStabilityProof02} as
 \begin{eqnarray}\label{eqDiscreteEntropyStabilityProof03}
  \frac{s^{k+1}-s^k}{\delta t_k}&\geq&-\frac{1}{T^{k+1}}\div\q^{k+1}-\div(\u_\star^ks^k)\nonumber\\
  &&+\sum_{i=1}^M\frac{\mu_{i}^{k+1}+M_{w,i}gh}{T^{k+1}}\div\J_i^{k+1}+\frac{1}{T^{k+1}}\bm{\tau}^{k+1}:\grad\u^{k+1},
   \end{eqnarray}
 which leads to \eqref{eqDiscreteEntropyStability01}.
 
 We now analyze the terms in the  right-hand side of \eqref{eqDiscreteEntropyStability01}.  The first three terms are the conservative terms, which do not produce the entropy, while the rest terms are    the entropy production terms, which shall be non-negative according to the second law of thermodynamics.  The non-negativity of the fourth  and fifth terms is true due to the choice principles of $\q^{k+1}$ and $\J_i^{k+1}$. The last term is obviously non-negative. Consequently, the proposed scheme obeys the second law of thermodynamics. 
 For   the isolated system,  the homogeneous Neumann boundary conditions are applied (that is, all terms vanish on the boundary when integration by parts is applied), and thus  the inequality \eqref{eqDiscreteEntropyStability02} can be obtained by integrating \eqref{eqDiscreteEntropyStability01} over the domain $\Omega$.
\end{proof}

\section{Numerical tests}

In this section, we will apply the proposed method  to simulate  non-isothermal multi-component two-phase   flow problems.  A binary   mixture composed of methane (C$_1$) and pentane (C$_5$) is located in   a square  domain $\Omega$ with the length $20$~nm. To validate the proposed method, we simulate the dynamics of the isolated systems, showing the entropy increase with time steps as proved in \eqref{eqDiscreteEntropyStability02}.  The mass-average velocity is employed with the diffusion mobility formulations given by  \eqref{eqMultiCompononentMassConserveDiffusionChoiceB}  taking  the mass-diffusion coefficients $\mathscr{D}_{12}=\mathscr{D}_{21}=10^{-8}$ m$^2$/s. The volumetric  viscosity and    the shear viscosity are taken equal to zero. The heat diffusion coefficient is taken as $\mathcal{K}=n\times10^{-3}$ J/(m$\cdot$s$\cdot$K), where $n=\sum_{i=1}^Mn_i$. The initial liquid-phase molar densities      are $n_{\textnormal{C}_1}^L=6.8663$ kmol/m$^3$ and $n_{\textnormal{C}_5}^L=4.7915$ kmol/m$^3$ respectively, while the initial gas-phase molar densities   are $n_{\textnormal{C}_1}^G=7.4302$ kmol/m$^3$ and $n_{\textnormal{C}_5}^G=0.6736$ kmol/m$^3$  respectively. The gravity effect is ignored. The parameter $\theta=0$ is taken in \eqref{eqConvexConcaveHelmholtzEnergy}. The rest physical parameters can be found in Appendix.
We use a  uniform  rectangular mesh with $40\times40$ elements.  We employ the cell-centered finite difference method and the upwind scheme   to discretize  the mass balance equations and total energy conservation equation,  and  the finite volume method  on the staggered mesh \cite{Tryggvason2011book}   for the momentum balance equation. These spatial discretization methods  can be   equivalent to the  special mixed finite element methods with specified  quadrature rules \cite{arbogast1997mixed,Girault1996Mac}.

In this example,  a square-shaped  droplet is located in   the center of the domain  at the initial time.   The initial molar density distributions of  C$_1$ and C$_5$ are depicted  in  Figures \ref{IsolatedSysC1andC5MolarDensityC1}(a) and \ref{IsolatedSysC1andC5MolarDensityC5}(a) respectively. The initial temperature is uniformly  taken equal to 310 K.   The time step size is taken as $10^{-12}$ s, and 60  time steps are simulated. 

The total entropy  profiles  with time steps are depicted in Figure \ref{IsolatedSysC1andC5TotalEntropy}(a), while  Figure \ref{IsolatedSysC1andC5TotalEntropy}(b)   is a zoom-in plot of Figure \ref{IsolatedSysC1andC5TotalEntropy}(a)  in the later time steps. It is  shown   that the total entropy  always  increase in the simulation process,  and    as a result, the proposed method can preserve the second law of thermodynamics.

  Figures \ref{IsolatedSysC1andC5MolarDensityC1} and \ref{IsolatedSysC1andC5MolarDensityC5} illustrate  the dynamical  process  of  molar densities of  C$_1$ and C$_5$ at different time steps, while  Figure \ref{IsolatedSysC1andC5Temperature} illustrates the temperature contours at different time steps. Moreover, in Figure \ref{IsolatedSysC1andC5Velocity}, we show  the fluid motion   including  the velocity field and  magnitudes of both velocity components. It   is clearly observed that the  initially square-shaped droplet is gradually changing   to a circle, and subsequently, the mixture vapor  is partially cooled and condenses into the liquid phase due to the lower temperature in the droplet region.

\begin{figure}
           \centering \subfigure[]{
            \begin{minipage}[b]{0.45\textwidth}
            \centering
             \includegraphics[width=0.95\textwidth,height=2in]{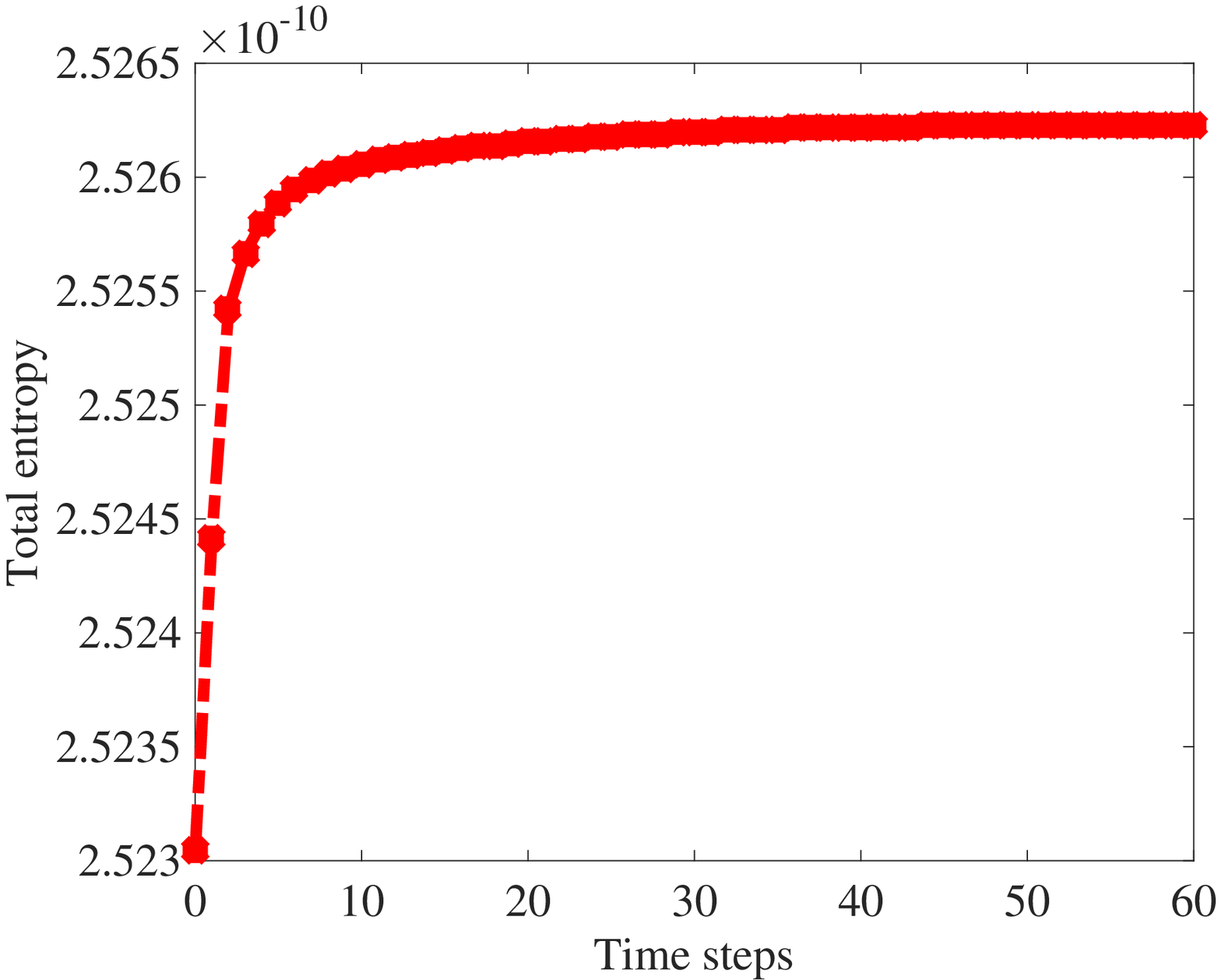}
            \end{minipage}
            }
            \centering \subfigure[]{
            \begin{minipage}[b]{0.45\textwidth}
            \centering
             \includegraphics[width=0.95\textwidth,height=2in]{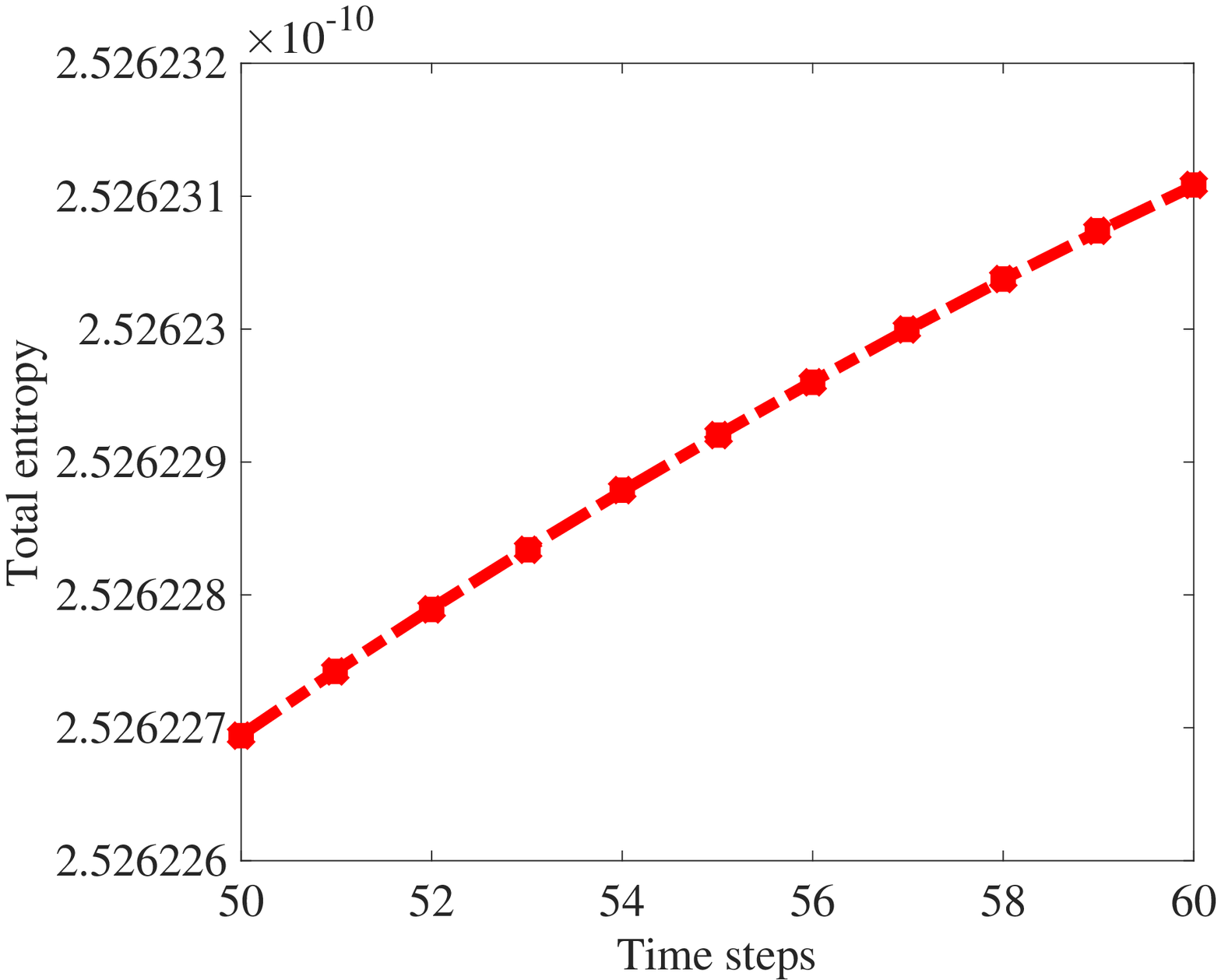}
            \end{minipage}
            }
           \caption{Example 1: the  total entropy profile with time steps.}
            \label{IsolatedSysC1andC5TotalEntropy}
 \end{figure}

\begin{figure}
            \centering \subfigure[C$_1$ at the initial time]{
            \begin{minipage}[b]{0.3\textwidth}
               \centering
             \includegraphics[width=\textwidth,height=0.9\textwidth]{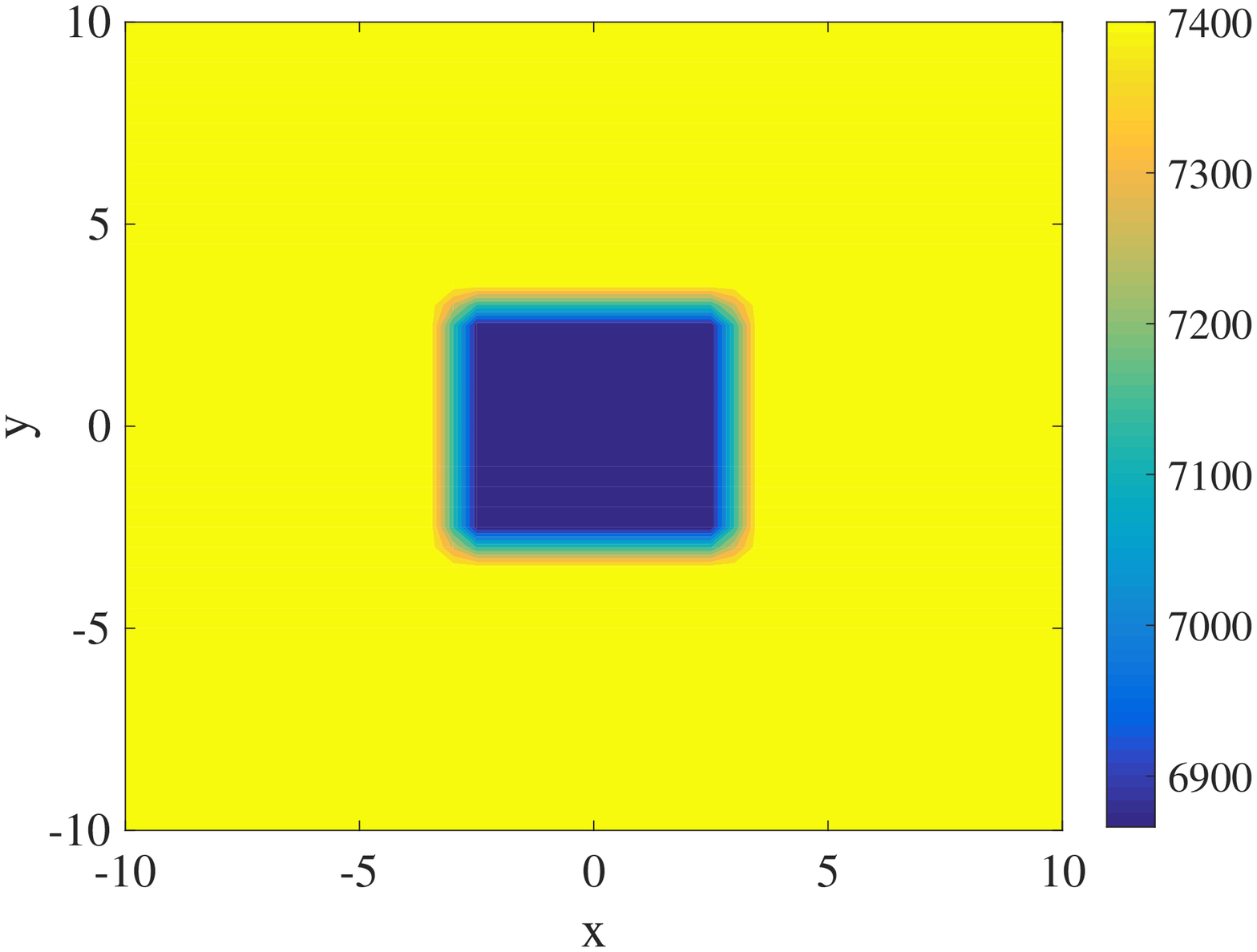}
            \end{minipage}
            }
            \centering \subfigure[C$_1$ at the 30th time step]{
            \begin{minipage}[b]{0.3\textwidth}
            \centering
             \includegraphics[width=\textwidth,height=0.9\textwidth]{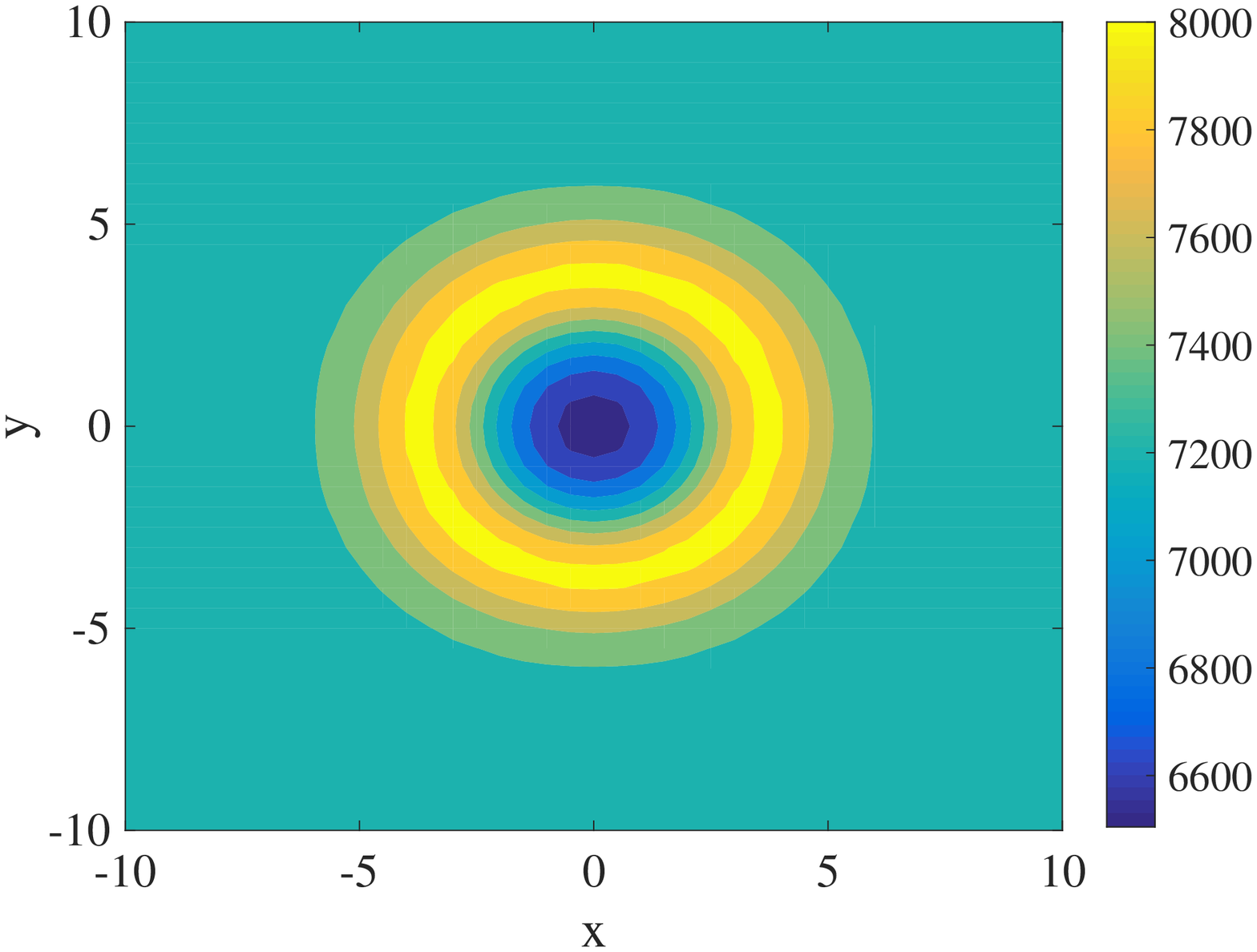}
            \end{minipage}
            }
           \centering \subfigure[C$_1$ at the 60th time step]{
            \begin{minipage}[b]{0.3\textwidth}
               \centering
             \includegraphics[width=\textwidth,height=0.9\textwidth]{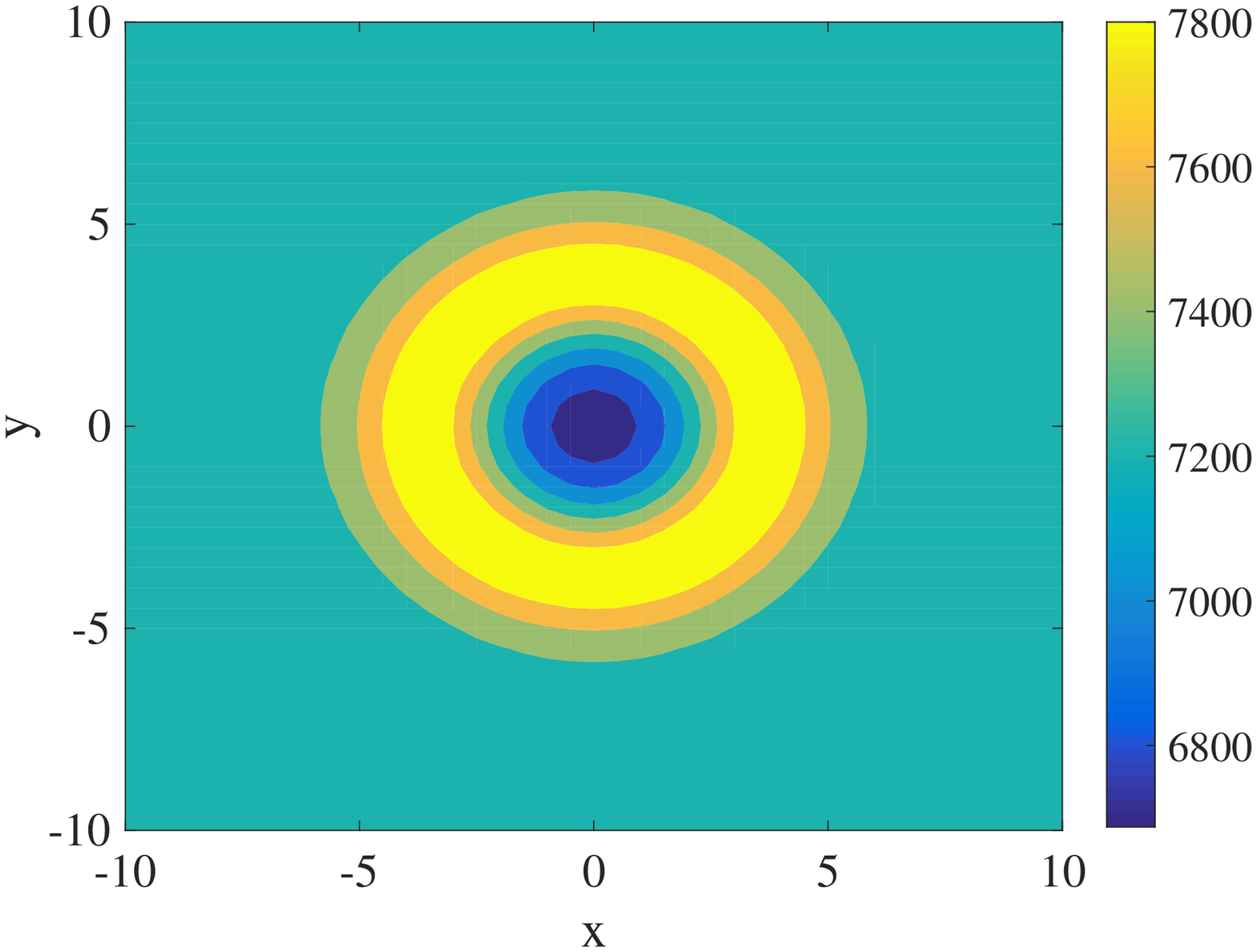}
            \end{minipage}
            }
            \caption{Example 1:  molar densities of C$_1$    at  different time steps.}
            \label{IsolatedSysC1andC5MolarDensityC1}
 \end{figure}
 
\begin{figure}
            \centering \subfigure[C$_5$ at the the initial time]{
            \begin{minipage}[b]{0.3\textwidth}
               \centering
             \includegraphics[width=\textwidth,height=0.9\textwidth]{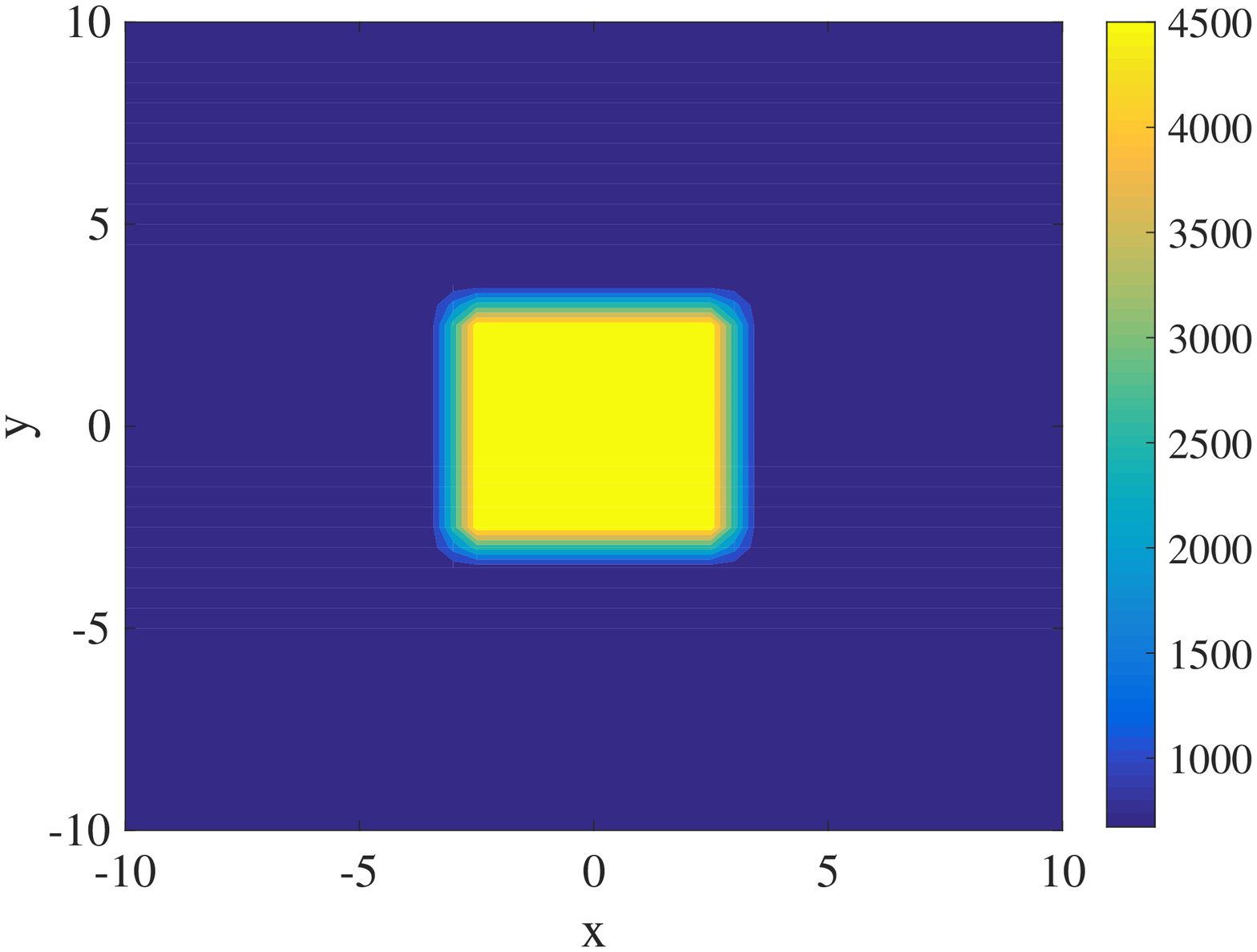}
            \end{minipage}
            }
            \centering \subfigure[C$_5$ at the 30th time step]{
            \begin{minipage}[b]{0.3\textwidth}
            \centering
             \includegraphics[width=\textwidth,height=0.9\textwidth]{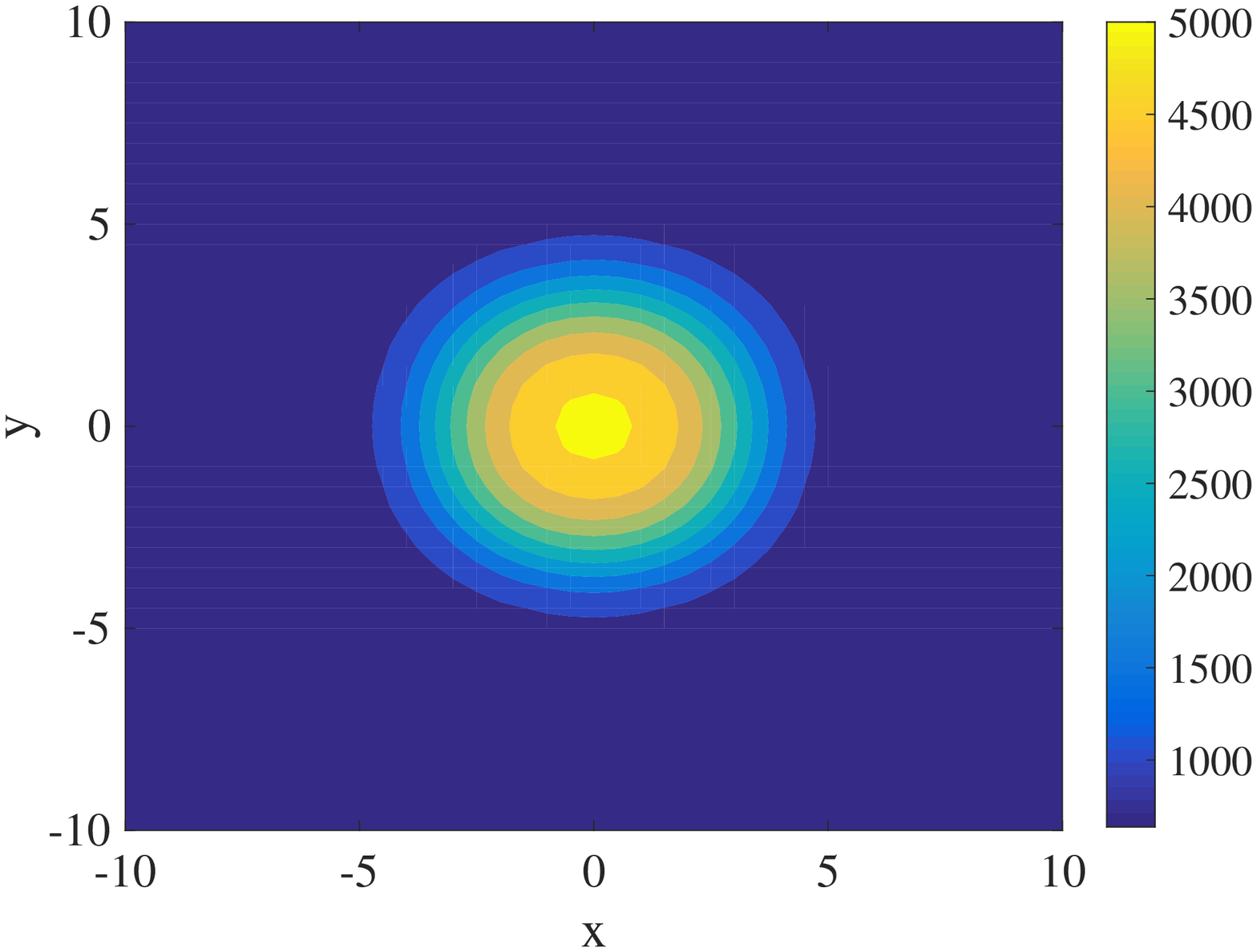}
            \end{minipage}
            }
           \centering \subfigure[C$_5$ at the 60th time step]{
            \begin{minipage}[b]{0.3\textwidth}
               \centering
             \includegraphics[width=\textwidth,height=0.9\textwidth]{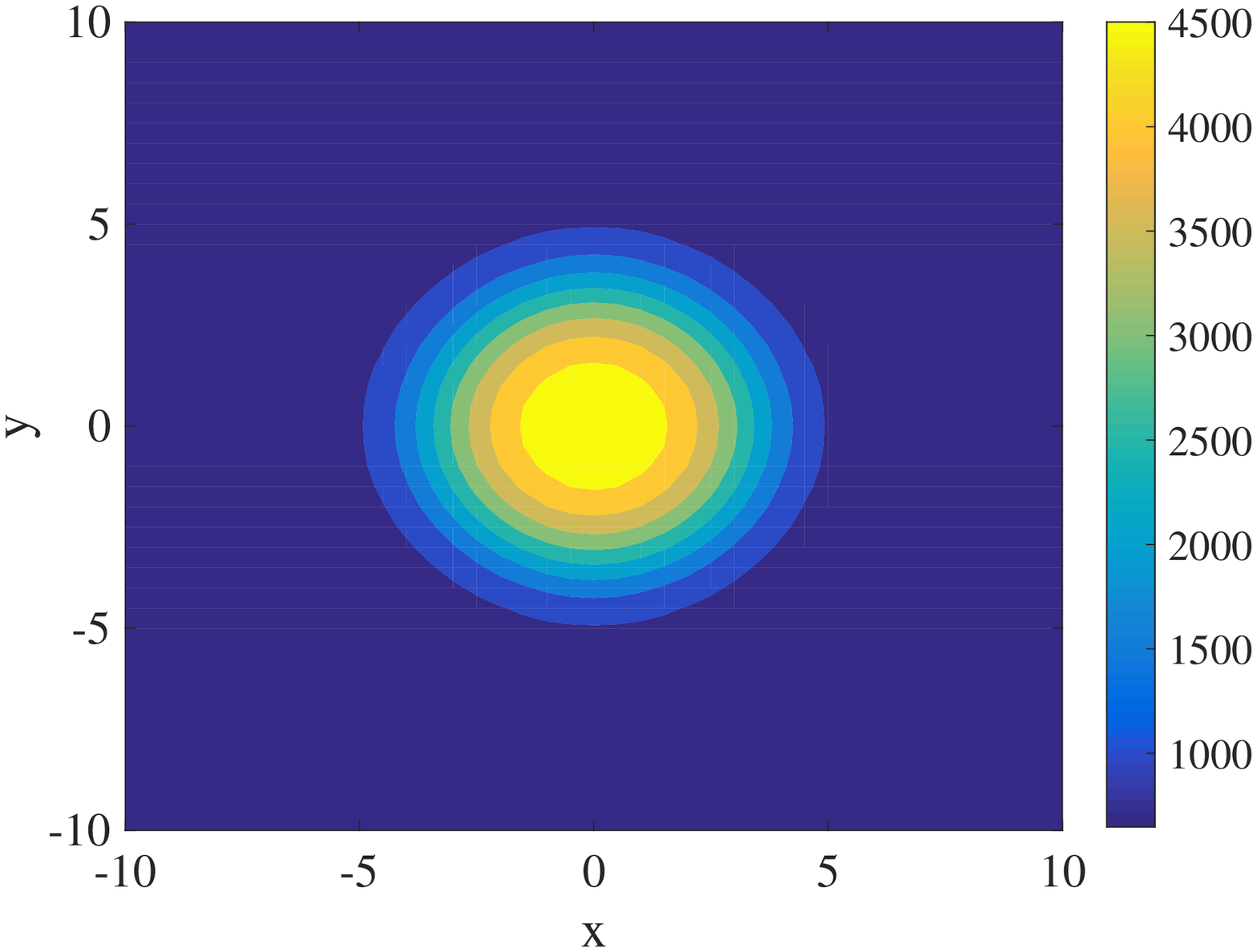}
             \end{minipage}
            }
            \caption{Example 1:  molar densities of   C$_5$  at  different time steps.}
            \label{IsolatedSysC1andC5MolarDensityC5}
 \end{figure}

\begin{figure}
           \centering \subfigure[$T$ at the10th time step]{
            \begin{minipage}[b]{0.3\textwidth}
            \centering
             \includegraphics[width=0.95\textwidth,height=0.9\textwidth]{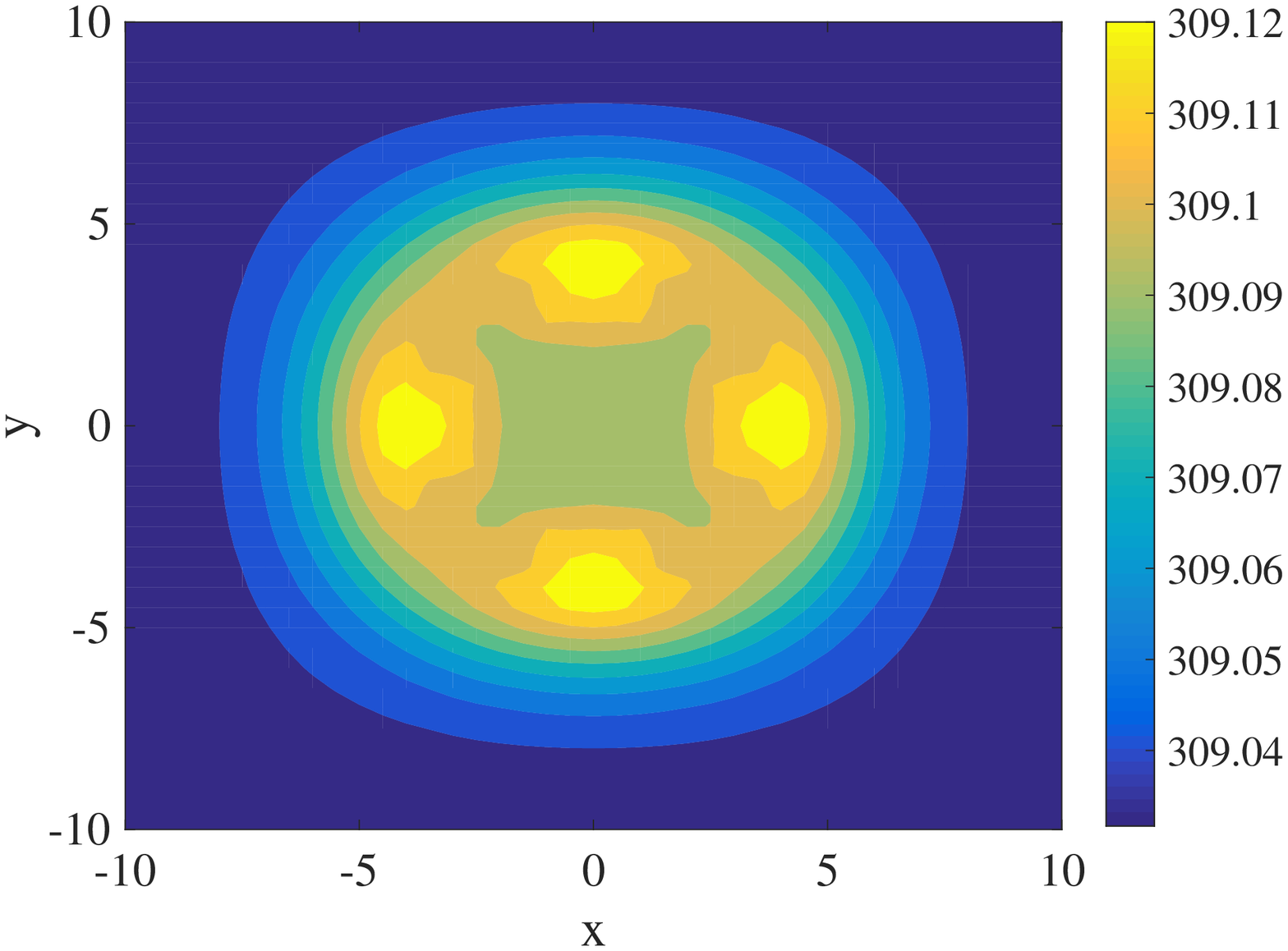}
            \end{minipage}
            }
           \centering \subfigure[$T$ at the 30th time step]{
            \begin{minipage}[b]{0.3\textwidth}
            \centering
             \includegraphics[width=0.95\textwidth,height=0.9\textwidth]{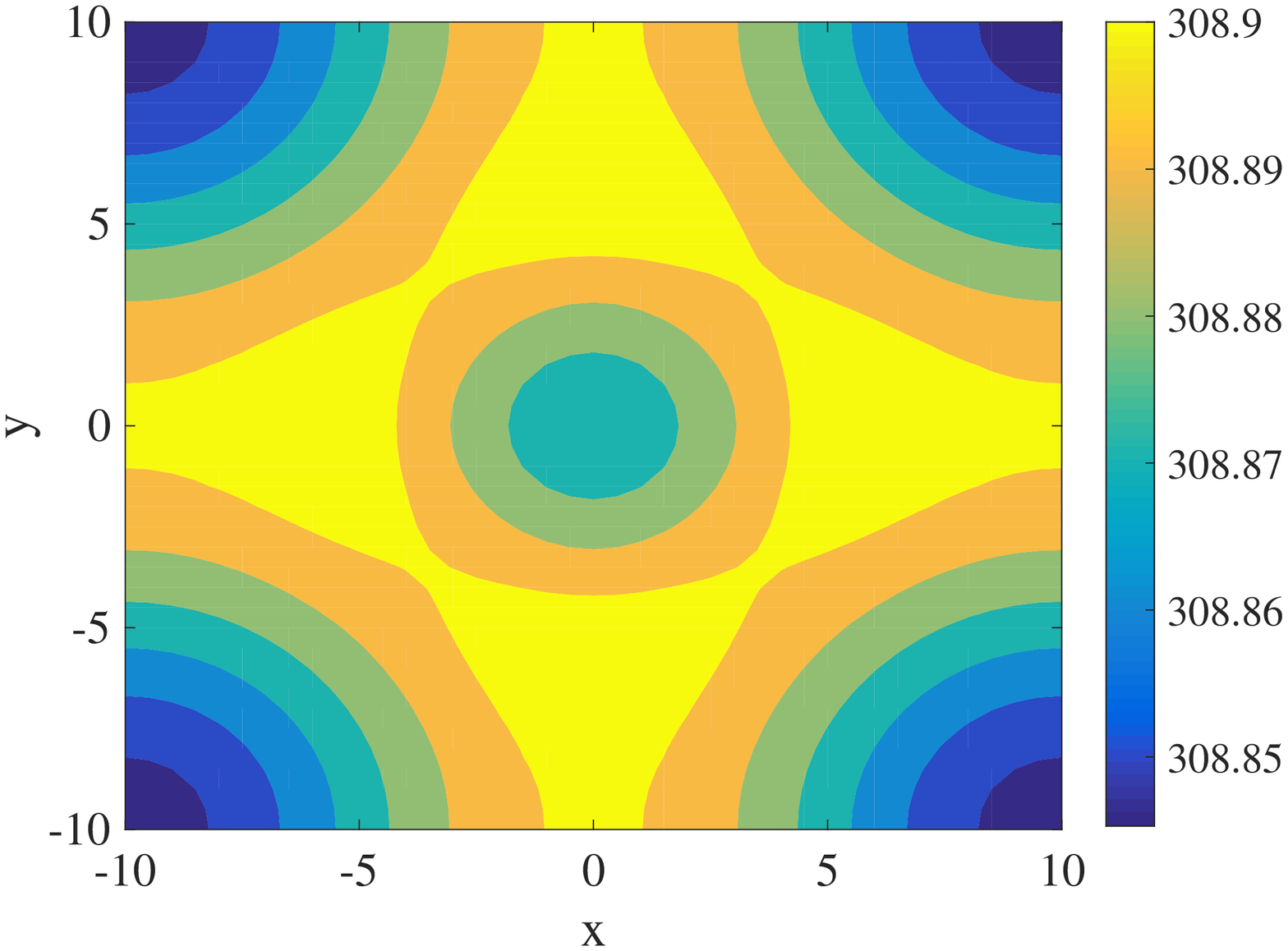}
            \end{minipage}
            }
           \centering \subfigure[$T$ at the 60th time step]{
            \begin{minipage}[b]{0.3\textwidth}
            \centering
             \includegraphics[width=0.95\textwidth,height=0.9\textwidth]{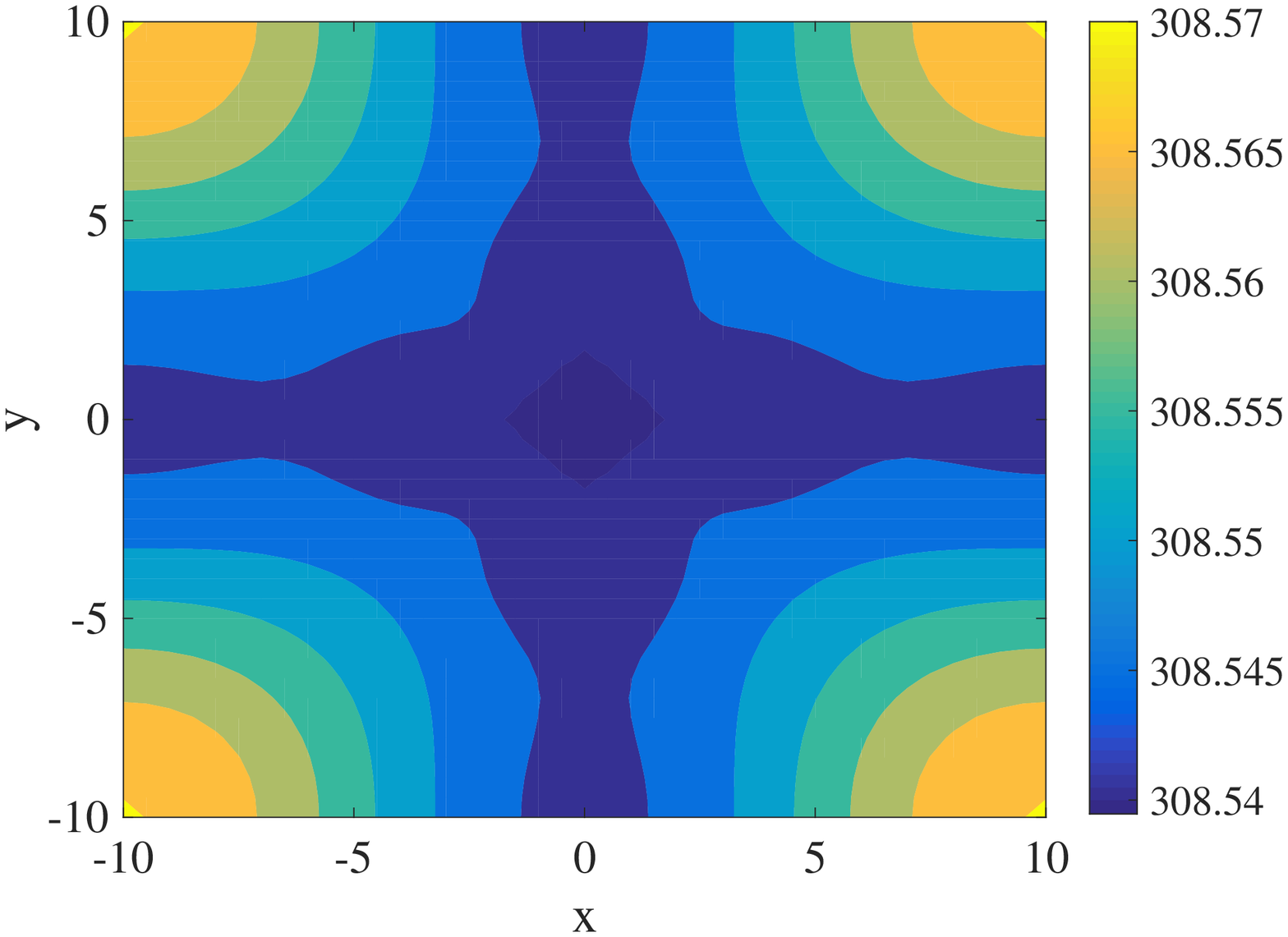}
            \end{minipage}
            }
         \caption{Example 1:  temperature ($T$) contours  at  different time steps.}
            \label{IsolatedSysC1andC5Temperature}
 \end{figure}

\begin{figure}
           \centering \subfigure[]{
            \begin{minipage}[b]{0.3\textwidth}
               \centering
             \includegraphics[width=\textwidth,height=0.9\textwidth]{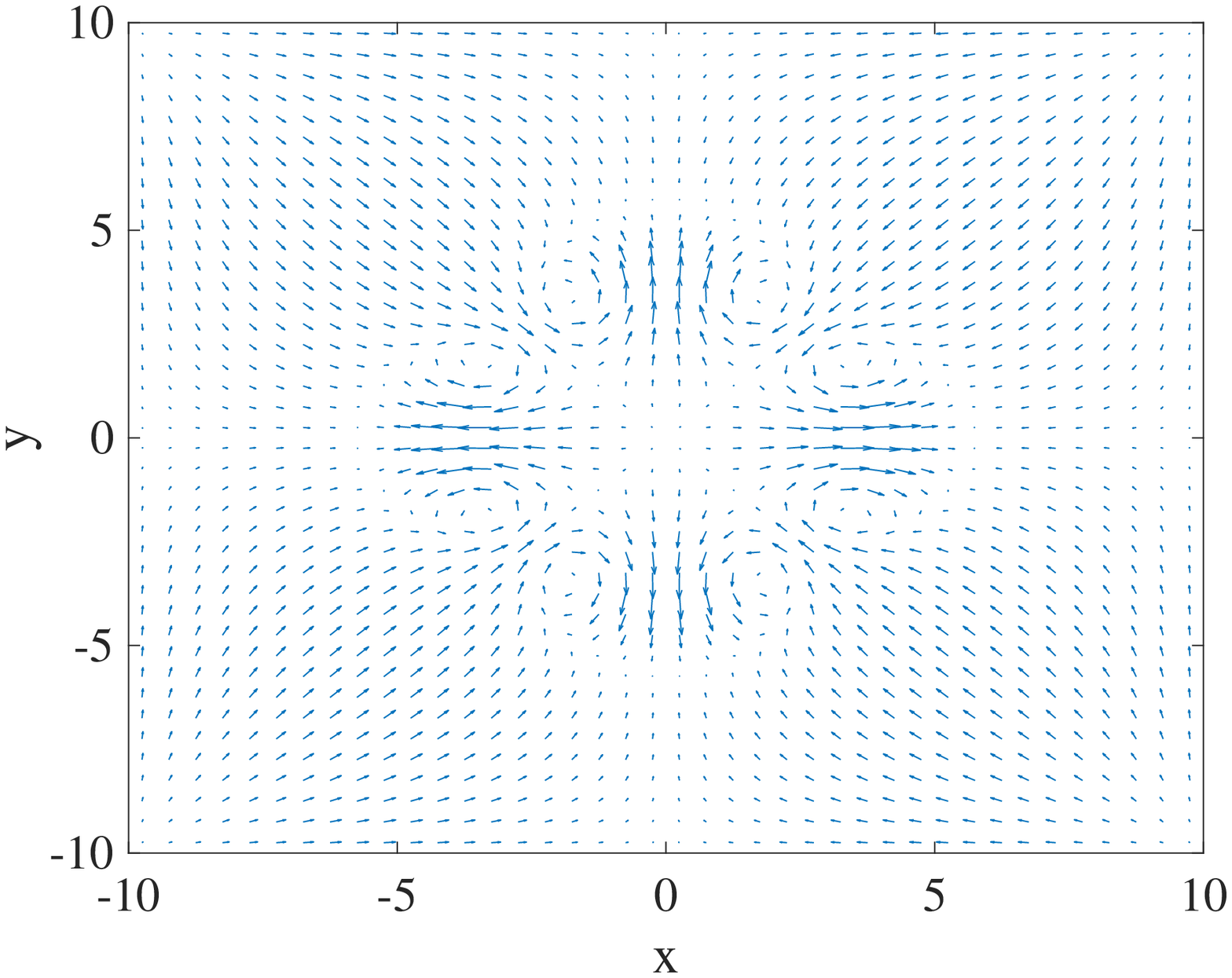}
            \end{minipage}
            }
            \centering \subfigure[]{
            \begin{minipage}[b]{0.3\textwidth}
            \centering
             \includegraphics[width=\textwidth,height=0.9\textwidth]{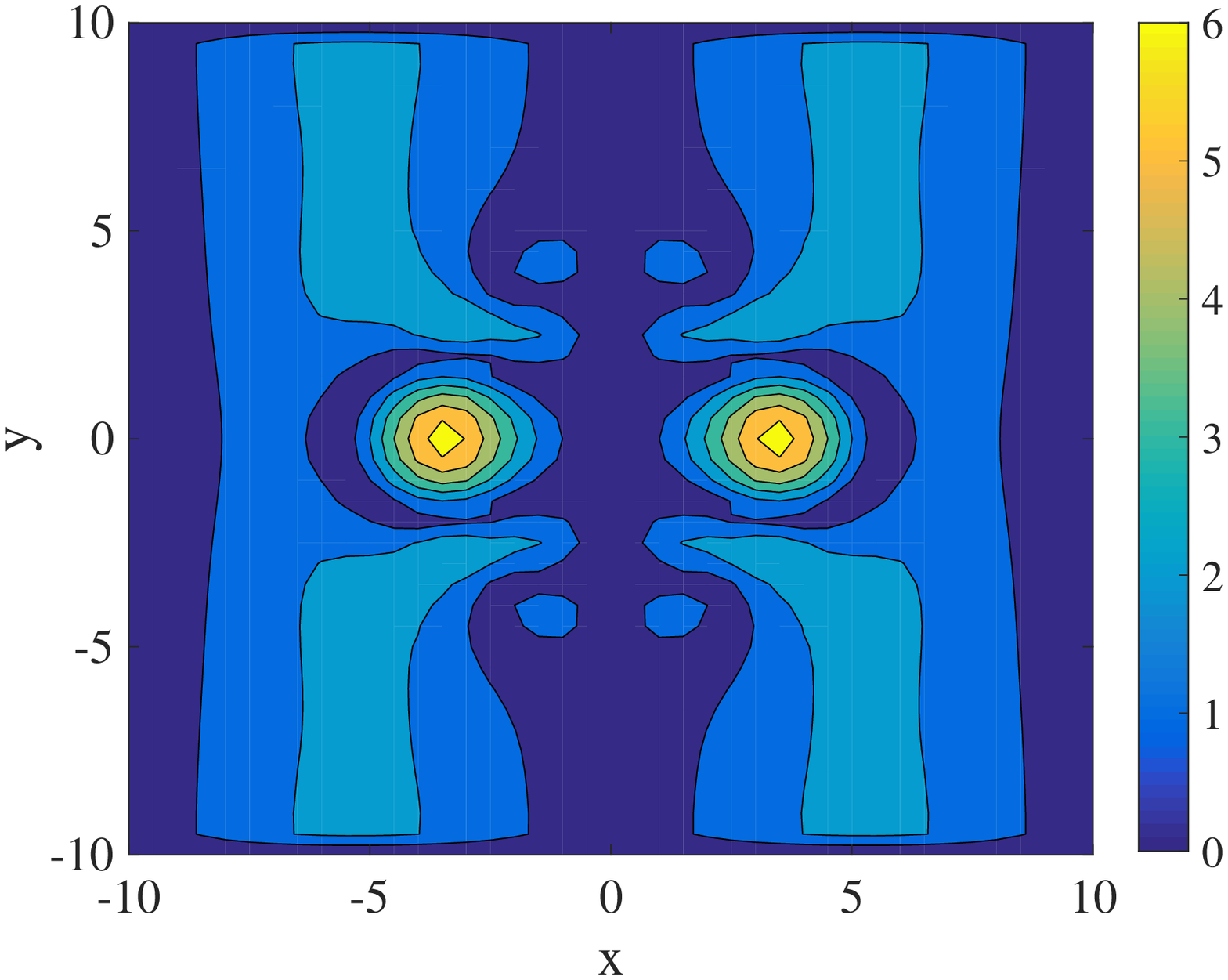}
            \end{minipage}
            }
           \centering \subfigure[]{
            \begin{minipage}[b]{0.3\textwidth}
               \centering
             \includegraphics[width=\textwidth,height=0.9\textwidth]{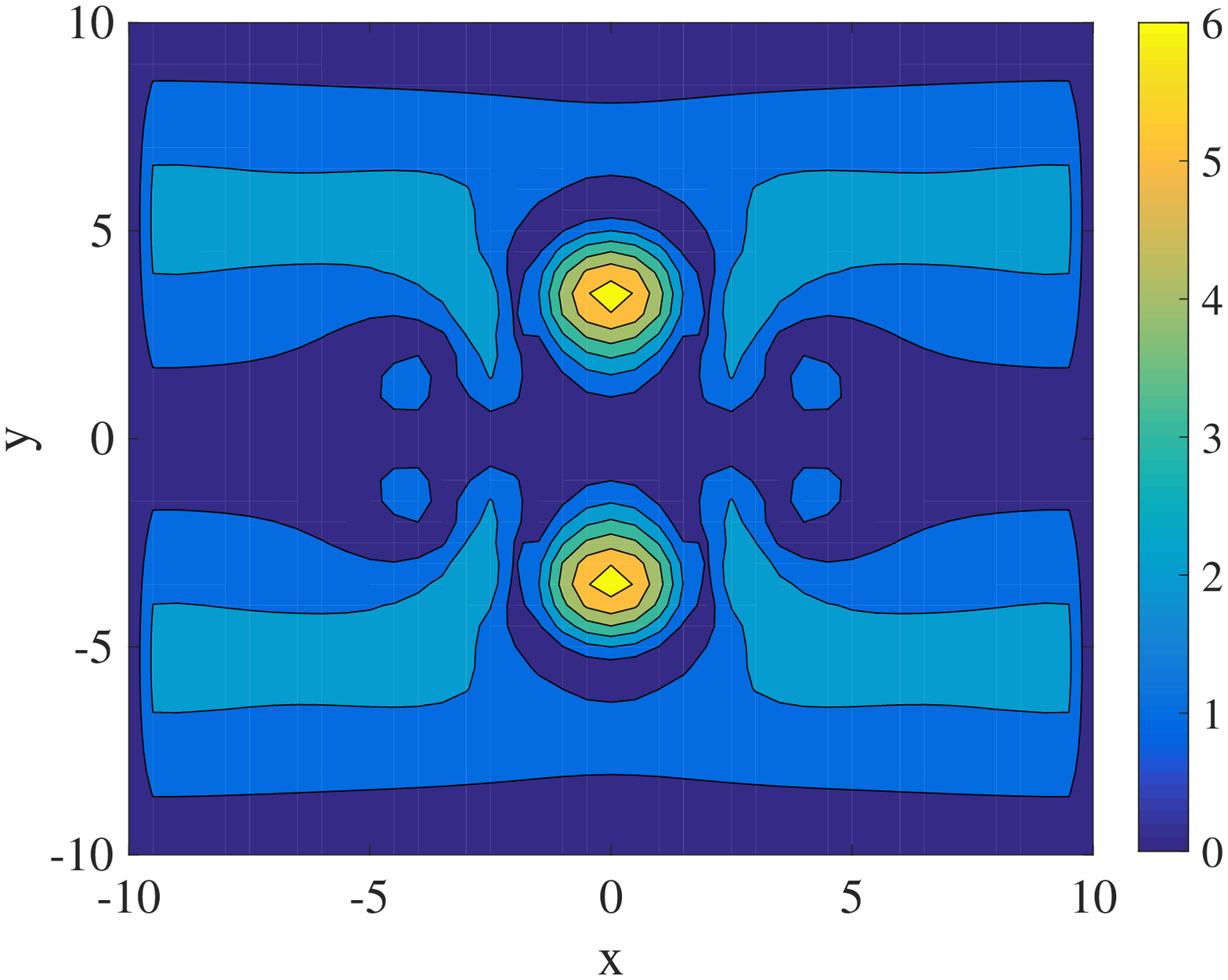}
            \end{minipage}
            }
            \centering \subfigure[]{
            \begin{minipage}[b]{0.3\textwidth}
               \centering
             \includegraphics[width=\textwidth,height=0.9\textwidth]{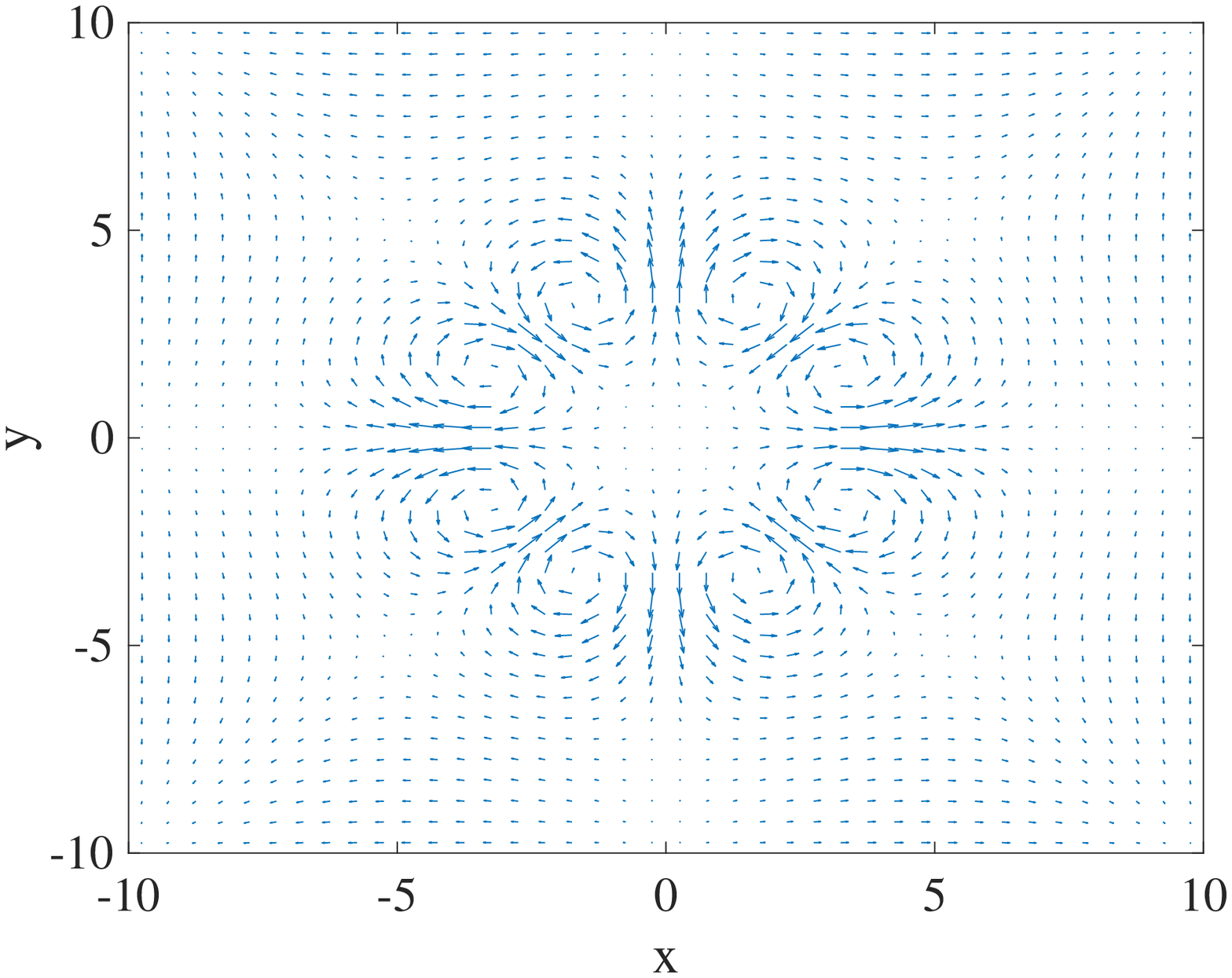}
            \end{minipage}
            }
            \centering \subfigure[]{
            \begin{minipage}[b]{0.3\textwidth}
            \centering
             \includegraphics[width=\textwidth,height=0.9\textwidth]{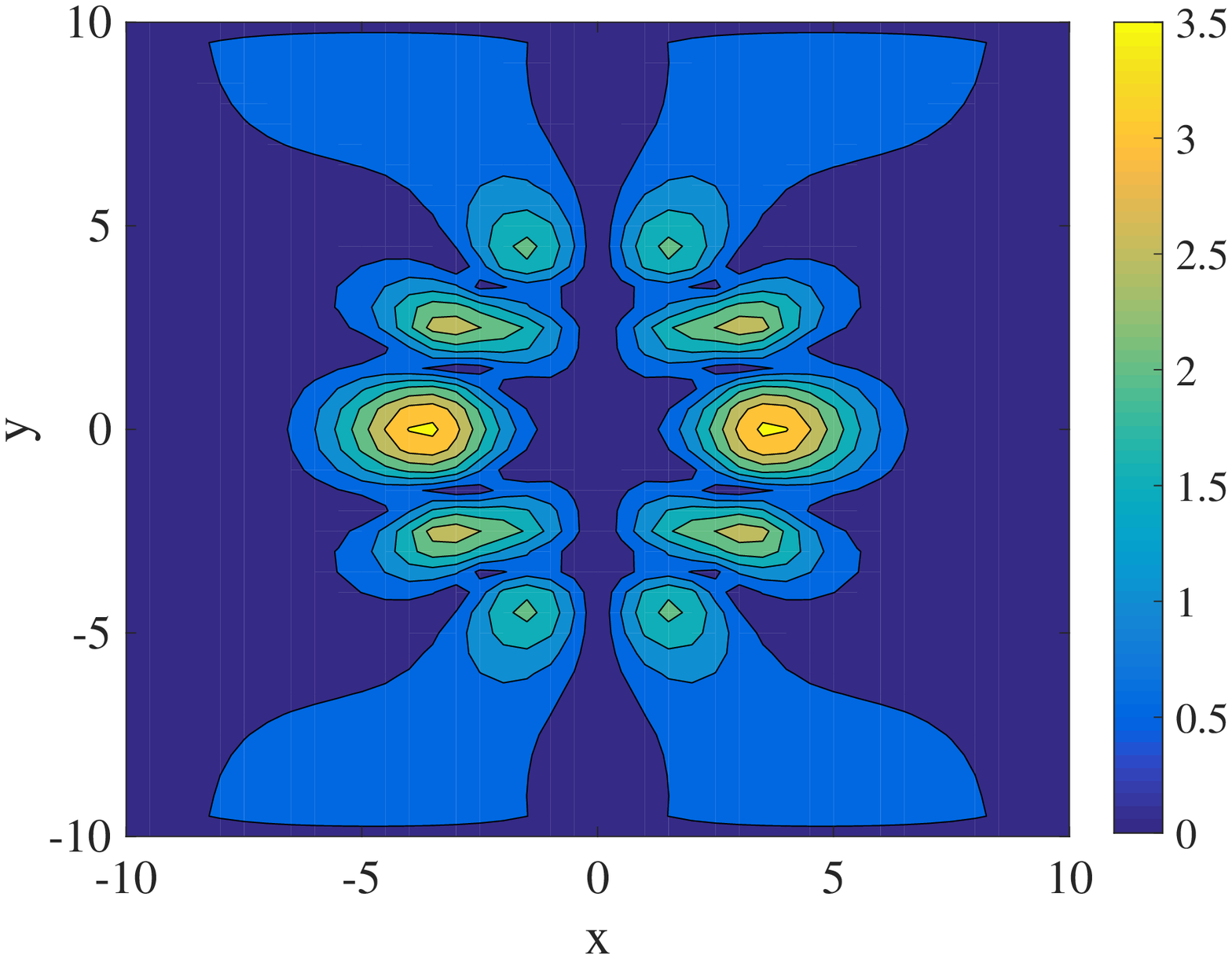}
            \end{minipage}
            }
           \centering \subfigure[]{
            \begin{minipage}[b]{0.3\textwidth}
               \centering
             \includegraphics[width=\textwidth,height=0.9\textwidth]{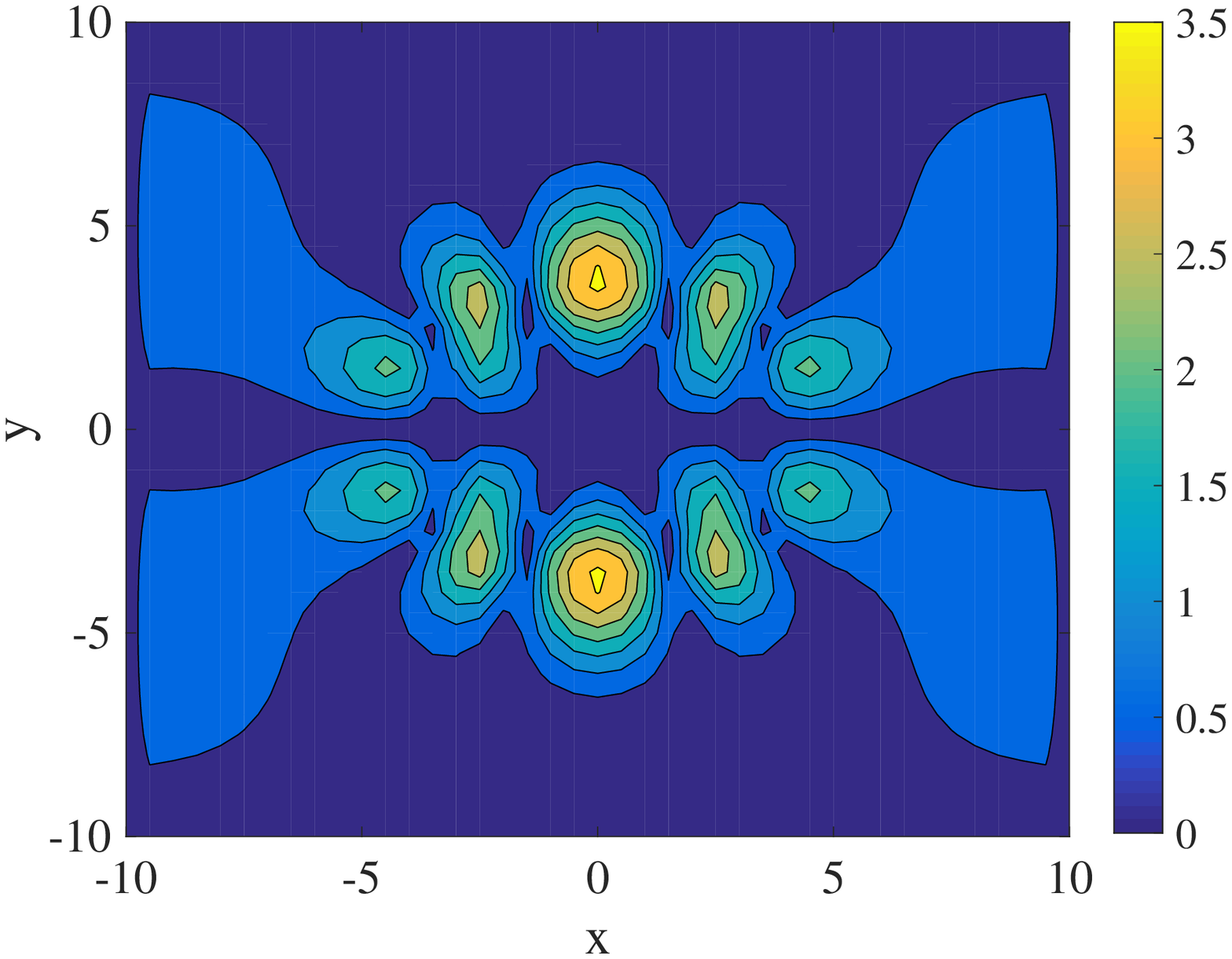}
             \end{minipage}
            }
          \caption{Example 1:   flow quivers (left column), magnitude contours of $x$-direction velocity component (center column), and magnitude contours of $y$-direction velocity component (right column)   at the 30th(top row) and 60th(bottom row) time step  respectively.}
            \label{IsolatedSysC1andC5Velocity}
 \end{figure}

\section{Conclusions}


A general model obeying the  thermodynamic laws  and Onsager's reciprocal relations and  characterizing compressibility and partial miscibility between multiple  fluids has been developed for   non-isothermal     multi-component diffuse-interface  two-phase  flows with realistic equations of state.  This model is formulised  by a set of nonlinear and coupling equations, including the component mass balance equations, the   momentum conservation equation and the total energy balance equation, which is unified for general reference velocities.  We have proved an important     relation between   pressure, chemical potential and temperature, and on the basis of it, we obtain   a new formulation of the  momentum conservation equation, which indicates   that the gradients of  chemical potentials and temperature become the primary driving force of the fluid motion except for the external forces.  
For numerical simulations, we have proposed an efficient, entropy stable numerical method based on  the convex-concave  splitting of Helmholtz free energy density with respect to molar densities and temperature.  We have proved unconditional  entropy stability of the proposed method by deriving the variations of Helmholtz free energy and kinetic energy with time steps.  Numerical results are provided to validate  the proposed  method.

\begin{appendix}\label{appendix}

\section{Helmholtz free energy density}
We describe the computations of Helmholtz free energy density $f_{b}(\n,T)$ of a homogeneous fluid determined by Peng-Robinson equation of state \cite{Peng1976EOS}. 

Let $R$ be the universal gas constant. 
We denote  by $T_{c_i}$ and $P_{c_i}$  the  $i$th component critical temperature and critical pressure, respectively.  For the $i$th component, let the reduced temperature be  $T_{r_i}=T/T_{c_i}$.  We let the mole fraction of component $i$ be $y_i=n_i/n$, where  $n=\sum_{i=1}^Mn_i$ is the overall molar density. 
The parameters $a_{i}$ and $b_{i}$ are calculated as
\begin{equation}\label{eqaibi}
   a_{i}= 0.45724\frac{R^2T_{c_i}^2}{P_{c_i}}\[1+m_i(1-\sqrt{T_{r_i}})\]^2,~~~~b_{i}= 0.07780\frac{RT_{c_i}}{P_{c_i}}.
\end{equation}
 The coefficients $m_i$ are calculated  by the following formulas
 \begin{subequations}\label{eqdefmi}
\begin{equation}
 m_i=0.37464 + 1.54226\omega_i-  0.26992\omega_i^2 ,~~\omega_i\leq0.49,
\end{equation}
\begin{equation}
 m_i=0.379642+1.485030\omega_i-0.164423\omega_i^2 +0.016666 \omega_i^3,~~\omega_i>0.49,
\end{equation}
\end{subequations}
where $\omega_i$ is the acentric factor.
 Finally,  $a(T)$ and $b$ are calculated by
\begin{equation}\label{eqab}
   a =\sum_{i=1}^M\sum_{j=1}^M y_i y_j \(a_ia_j\)^{1/2}(1-k_{ij}),~~~~b =\sum_{i=1}^M  y_i b_{i},
\end{equation}
where $k_{ij}$ the given binary interaction coefficients for the energy parameters.

The   correlation coefficients $\alpha_{ik}$  estimate the molar heat capacity of ideal gas at the constant pressure   as 
\begin{eqnarray}\label{eqHeatCapacity01}
    \psi_i^p(T)=\sum_{k=0}^3\alpha_{ik}T^k.
\end{eqnarray}
We list the   correlation coefficients of methane and pentane  in Table \ref{tabParametersheatcapacity}.

The bulk Helmholtz free energy density, denoted by $f_{b}$,  is calculated as a sum of three contributions
\begin{eqnarray}\label{eqHelmholtzEnergy_a0_01}
    f_b(n,T)&=& f_b^{\textnormal{ideal}}(n,T) + f_b^{\textnormal{repulsion}}(n,T)+f_b^{\textnormal{attraction}}(n,T),
\end{eqnarray}
where
\begin{eqnarray}\label{eqHelmholtzEnergy_a0_01}
    f_b^{\textnormal{ideal}}(n,T)&=& n\vartheta_0 -ns_0T+\sum_{i=1}^Mn_i\sum_{k=0}^3\alpha_{ik}\frac{T^{k+1}-T_0^{k+1}}{k+1}-nR(T-T_0)\nonumber\\
    &&-\sum_{i=1}^Mn_iRT\ln\(\frac{P_0}{n_iRT}\)-\sum_{i=1}^Mn_iT\int_{T_0}^T\frac{\psi_i^p(\xi)}{\xi}d\xi,
\end{eqnarray}
\begin{eqnarray}\label{eqHelmholtzEnergy_a0_02}
    f_b^{\textnormal{repulsion}}(n,T)=-nRT\ln\(1-bn\),
\end{eqnarray}
\begin{eqnarray}\label{eqHelmholtzEnergy_a0_03}
    f_b^{\textnormal{attraction}}(n,T)= \frac{a(T)n}{2\sqrt{2}b}\ln\(\frac{1+(1-\sqrt{2})b n}{1+(1+\sqrt{2})b n}\),
\end{eqnarray}
where  $T_0=  298.15 $K, $ P_0 = 1 $bar,  $\vartheta_0=-2478.95687512 $ J$/$mol and  $s_0=59.5827$ J$/($mol$\cdot$K) for the mixture of methane  and pentane in numerical tests.

\section{Internal energy and entropy}
The bulk internal energy, denoted by $\vartheta_b$,  is formulated as 
 \begin{eqnarray}\label{eqinternalenergy}
    \vartheta_b(n,T)&=&  n\vartheta_0 +\sum_{i=1}^Mn_i\sum_{k=0}^3\alpha_{ik}\frac{T^{k+1}-T_0^{k+1}}{k+1}-nR(T-T_0)\nonumber\\
    &&+\frac{n\(a(T)-Ta'(T)\)}{2\sqrt{2}b}\ln\(\frac{1+(1-\sqrt{2})b n}{1+(1+\sqrt{2})b n}\),
\end{eqnarray}
where $a'(T)$ denotes the  derivative with respect to $T$.
We  denote by $s_b$ the bulk entropy and  express it as 
 \begin{eqnarray}\label{eqinternalenergy}
    s_b(n,T)&=& ns_0+  nR\ln\(1-bn\)+\sum_{i=1}^Mn_iR\ln\(\frac{P_0}{n_iRT}\)+\sum_{i=1}^Mn_i\int_{T_0}^T\frac{\psi^p_i(\xi)}{\xi}d\xi \nonumber\\
    &&-\frac{na'(T)}{2\sqrt{2}b}\ln\(\frac{1+(1-\sqrt{2})b n}{1+(1+\sqrt{2})b n}\).
\end{eqnarray}
The above formulations are referred to  \cite{smejkal2017phase}.

\section{Some physical parameters}

The  influence parameters in numerical tests are taken as $$\bm{c}=(c_{ij})_{i,j=1}^2=\left(\begin{array}{cc}0.0282 & 0.0462 \\ 0.0462 & 0.3019\end{array}\right)\times1e-18.$$

We list some physical parameters of the substances in Tables \ref{tabParametersPREOS}, \ref{tabParametersheatcapacity} and \ref{tabbinaryinteractioncoefficients}. 
 \begin{table}[htp]
\caption{Physical parameters}
\begin{center}
\begin{tabular}{cccccc}
\hline
Substance & $P_c$(bar) & $T_c$(K) & Acentric factor & $M_w$(g/mole)\\
\hline
methane     &  45.99                          & 190.56                            &0.011              &16.04 \\
pentane      &  33.70                          & 469.7                              &0.251              &72.15\\
\hline
\end{tabular}
\end{center}
\label{tabParametersPREOS}
\end{table}

   \begin{table}[htp]
\caption{The   correlation coefficients in the heat capacity}
\begin{center}
\begin{tabular}{cccccc}
\hline
Substance & $\alpha_0$ & $\alpha_1$ & $\alpha_2$ & $\alpha_3$\\
\hline
methane     &  19.25        & 5.213e-2  & 1.197e-5         & -1.132e-8 \\
pentane      &  -3.626       &  4.873e-1 & -2.580e-4        & 5.305e-8\\
\hline
\end{tabular}
\end{center}
\label{tabParametersheatcapacity}
\end{table}

\begin{table}[htp]
\caption{Binary interaction coefficients}
\begin{center}
\begin{tabular}{cccccc}
\hline
                   & methane    & pentane    \\
\hline
methane     &  0                                 & 0.041               \\
pentane      &  0.041                          & 0                     \\
\hline
\end{tabular}
\end{center}
\label{tabbinaryinteractioncoefficients}
\end{table}

\end{appendix}

\small

\end{document}